\newtheorem{theorem}{Theorem}[section]
\newtheorem{lemma}[theorem]{Lemma}
\newcommand{\dd}{\mathop{}\!\mathrm{d}}
\journal{journal}
\begin{document}

\begin{frontmatter}

\title{Bivariate collocation for computing $R_{0}$ in epidemic models with two structures}


\author[address1]{Dimitri Breda\corref{mycorrespondingauthor}}
\ead{dimitri.breda@uniud.it}

\author[address1]{Simone De Reggi}
\ead{dereggi.simone@spes.uniud.it}

\author[address1,address2,address3]{Francesca Scarabel}
\ead{francesca.scarabel@manchester.ac.uk}

\author[address1]{Rossana Vermiglio}
\ead{rossana.vermiglio@uniud.it}

\author[address2]{Jianhong Wu}
\ead{wujh@yorku.ca}

\address[address1]{CDLab -- Computational Dynamics Laboratory\\Department of Mathematics, Computer Science and Physics -- University of Udine\\Via delle Scienze 206, 33100, Italy}

\address[address2]{LIAM -- Laboratory for Industrial and Applied Mathematics\\Department of Mathematics and Statistics -- York University\\4700 Keele Street, Toronto, ON M3J 1P3, Canada}

\address[address3]{Department of Mathematics, The University of Manchester, Oxford Rd, M13 9PL, Manchester (UK), Joint UNIversities Pandemic and Epidemiological Research, \url{https://maths.org/juniper/}}

\cortext[mycorrespondingauthor]{Corresponding author}

\begin{abstract}
Structured epidemic models can be formulated as first-order hyperbolic PDEs, where the ``spatial'' variables represent individual traits, called {\it structures}. For models with two structures, we propose a numerical technique to approximate $R_{0}$, which measures the transmissibility of an infectious disease and, rigorously, is defined as the dominant eigenvalue of a next-generation operator.
Via bivariate collocation and cubature on tensor grids, the latter is approximated with a finite-dimensional matrix, so that its dominant eigenvalue can easily be computed with standard techniques.
We use test examples to investigate experimentally the behavior of the approximation: the convergence order appears to be infinite when the corresponding eigenfunction is smooth, and finite for less regular eigenfunctions.
To demonstrate the effectiveness of the technique for more realistic applications, we present a new epidemic model structured by demographic age and immunity, and study the approximation of $R_{0}$ in some particular cases of interest.
\end{abstract}

\begin{keyword}
bivariate collocation\sep spectral approximation\sep spectral radius\sep next-generation operator\sep basic reproduction number\sep structured population dynamics
\MSC[2010] 65J10\sep 65L15\sep 65M70\sep 37N25\sep 47A75\sep 92D30

\end{keyword}

\end{frontmatter}


\section{Introduction}
\label{s_{i}ntroduction}
In the mathematical modeling of epidemics, the {\it basic reproduction number} $R_{0}$ measures the average number of secondary cases produced by a typical infected individual in a fully susceptible population. As such, it is widely used to evaluate the potential of the spread of an infectious disease outbreak, the speed of the spread of the disease if an outbreak does occur and its controllability through public health interventions. Its estimation is thus a primary objective and, in this respect, the case of coronavirus disease 2019 (COVID-19) is a recent example of the essential role played by this key quantity \cite{bmk20,kun20,ps20,tang20}.

On the one hand, from the dynamical systems point of view, the threshold value $R_{0}=1$ represents a transcritical bifurcation of the disease-free equilibrium (absence of infected) from being locally asymptotically stable to unstable, with the emergence of an equilibrium in which part of the population has been infected. The standard analysis of the dynamic behaviors is based on linearizing the nonlinear model at hands around the disease-free equilibrium to investigate the local stability of the latter, typically through an eigenvalue-based approach. 

On the other hand, in an attempt to capture a more realistic portrait of the infection transmission, some modern population models account for individual variability by introducing continuous structuring variables, or, briefly, {\it structures}. These represent physical or physiological traits that determine the epidemiological or vital properties of the individuals \cite{mr08,md86}. The mathematical description is mostly based on Partial Differential Equations (PDEs) in time and a second variable, that we call ``space'' in short, that collects all the (continuous) structuring variables. Notable instances of structures are chronological age, time since infection, some immunity level or even proper spatial position and distancing. 

The combination of both aforementioned aspects results in the study of a linear evolution equation on an abstract space of functions depending on each structuring variable. This evolution can be suitably described by means of two operators, say $B$ and $M$, where the former describes the ``birth'' process of newly infected individuals, whereas the latter collects all the other possible processes including, e.g., death, recovery, evolution of the individual trait, or transfer to other classes (for instance quarantine). Then, under mild and classical hypotheses, one can define the so-called {\it Next Generation Operator (NGO)} -- which maps a given composition of the population to the composition after one generation of infection -- as $BM^{-1}$ and characterize $R_{0}$ as its spectral radius \cite{bcr17b}.

The latter characterization can be favorably exploited for numerical purposes. Indeed, any reasonable finite-dimensional approximation of the NGO provides a matrix whose eigenvalues allow in principle to estimate $R_{0}$. This general idea has been first proposed in \cite{bfrv20} in terms of discretizing separately $B$ and $M$. Therein, an approach based on pseudospectral collocation for populations with a single structure has been thoroughly investigated by experimenting on several models from either epidemiology or ecology. A proper rigorous analysis of the convergence of this methodology has been carried out in \cite{bkrv20}, confirming the expected spectral accuracy \cite{tref00} in the presence of smooth coefficients, as well as promising behaviors even in the absence of full regularity or of compactness of the NGO. The outcome is a quite reliable tool, with convergence potentially of infinite order\footnote{By ``convergence of infinite order'' we mean that the error decays faster than any finite polynomial order (or even exponentially): this is the case in general of (pseudo)spectral methods when applied to functions $C^{\infty}$ (or analytic), see, e.g., \cite{tref00}.}, thus more efficient than the only other two methods available in the literature \cite{guo19,kun17}, based respectively on $\theta$- and Euler discretization schemes. Rather than for the accuracy itself, the advantage is that of working with much smaller matrices, reducing the computational burden and thus allowing for stability and bifurcation analyses, which are typically performed in a continuation framework \cite{doe07}. This is particularly valuable in the presence of varying or uncertain model parameters, as typically happens in realistic contexts.

The current work extends from the numerical and experimental points of view the pseudospectral approach of \cite{bfrv20,bkrv20} to population models with two structures, by resorting to bivariate collocation and cubature on tensor grids. 

Models including more than one structure have long been suggested in the literature, see, e.g., \cite{web08} and the reference therein, as they allow to better describe real phenomena. Nevertheless, it is only very recently that a systematic mathematical investigation has gained renovated interest \cite{khr20b,khr20a}, providing also the theoretical background essential to numerical developments. To the best of the authors' knowledge, the resulting technique is the first available numerical method for computing $R_{0}$ in the case of two structures, thus providing modelers with a tool to study applications in more complex and realistic settings. Moreover, it is designed to tackle a quite versatile class of models, which includes instances from both \cite{web08} and \cite{khr20b,khr20a}.

The extension to the case of two structures is not completely straightforward from the numerical point of view due to, e.g., the necessity of resorting to cubature and vectorization for the sake of implementation. Moreover, we anticipate that a straightforward extension to the case of two structures of the convergence analysis developed in \cite{bkrv20} is not immediate (and hence out of the scope of the present work), as the search for a characteristic equation relies on tools for PDEs rather than on more standard tools for ordinary differential equations as those used in \cite{bkrv20}.

The contents are organized as follows. In Section \ref{s_linear} we first resume from \cite{bcr17b} the main aspects leading to the definition of $R_{0}$ and then introduce the class of models of interest, i.e., first-order hyperbolic PDEs with two ``spatial'' variables and nonlocal boundary conditions of integral type. The numerical approach is presented in Section \ref{s_numerical}. A thorough experimental study of convergence is performed in Section \ref{s_implementation}. In Section \ref{s_ageimmunity} we apply the method to an age-immunity model we introduced for studying childhood diseases and the relevant immunization programs. Some concluding remarks are given in Section \ref{s_concluding}. Matlab demos are freely available at \url{http://cdlab.uniud.it/software}.
\section{Linear evolution, $R_{0}$ and models}
\label{s_linear}
Let $X$ be a Banach lattice of real functions representing the density of individuals of a population depending on some structuring variables. Following \cite{bcr17b} and what anticipated in the introduction, we consider a linear evolution equation of the form
\begin{equation}\label{linear}
v'(t)=Bv(t)-Mv(t),\quad t\geq0,
\end{equation}
where $B:X\to X$ is a linear operator representing the birth process of the population and $M:\mathcal{D}(M)\subseteq X\to X$ is a linear operator representing all the other processes. Typically, $\mathcal{D}(M)$ is a subspace characterized by some degree of smoothness of $M$ and additional linear constraints, $B$ is positive and bounded and $-M$ generates a strongly-continuous semigroup $\{T(t)\}_{t\geq0}$ of positive linear operators, with strictly negative spectral bound. The latter guarantees extinction in the absence of birth, as well as the invertibility of $M$ with $M^{-1}=\int_{0}^\infty T(t)\dd t$. Then the NGO $BM^{-1}:X\to X$ is well defined, positive and bounded, and one can characterize $R_{0}$ as its spectral radius. $R_{0}$ is actually a non-negative spectral value \cite{sha74} and if, in addition, $BM^{-1}$ is also compact with positive spectral radius, then the Krein-Rutman theorem \cite{kr48} ensures that $R_{0}$ is a positive eigenvalue, i.e., a solution $\lambda>0$ of 
\begin{equation}\label{eig}
BM^{-1}\psi=\lambda\psi
\end{equation}
for some positive eigenfunction $\psi$. Equivalently,  $\lambda$ satisfies
\begin{equation}\label{geig}
B\phi=\lambda M\phi
\end{equation}
with $\phi=M^{-1}\psi\in\mathcal{D}(M)$.

\bigskip
After linearization around the trivial steady state, several dynamical models of structured populations can be recast as \eqref{linear}, independently of them being based on ordinary, partial or delay differential equations, or even renewal equations. In this work we focus our attention on populations with two structures. As a reference we consider from \cite{khr20a} the initial-boundary value problem for a first-order linear hyperbolic PDE
\begin{equation}\label{ruan}
\left\{\setlength\arraycolsep{0.1em}\begin{array}{rcl}
\partial_{t} u(t,x,y)&+&\displaystyle \partial_{x}u(t,x,y)+\partial_{y}u(t,x,y)=\\[4mm]
&-&\displaystyle\mu(x,y)u(t,x,y)+\int_{x_{0}}^{\bar x}\int_{y_{0}}^{\bar y}K(x,y,\xi,\sigma)u(t,\xi,\sigma)\dd\sigma\dd\xi,\\[4mm]
u(t,x,y_{0})&=&\displaystyle\int_{x_{0}}^{\bar x}\int_{y_{0}}^{\bar y}\alpha(x,\xi,\sigma)u(t,\xi,\sigma)\dd\sigma\dd\xi,\\[4mm]
u(t,x_{0},y)&=&\displaystyle\int_{x_{0}}^{\bar x}\int_{y_{0}}^{\bar y}\beta(y,\xi,\sigma)u(t,\xi,\sigma)\dd\sigma\dd\xi,\\[4mm]
u(0,x,y)&=&\displaystyle u_{0}(x,y),
\end{array}\right.
\end{equation}
where $u(t,x,y)$ is the density of the given population at time $t\geq0$ depending on the two structuring variables $x\in[x_{0},\bar x]$ and $y\in[y_{0},\bar y]$ with $x_{0}<\bar x$ and $y_{0}<\bar y$ for given $x_{0},\bar x,y_{0},\bar y\in\mathbb{R}$. Above, $\mu$, $K$, $\alpha$, $\beta$ and $u_{0}$ are given functions, non-negative in their domain and satisfying the required conditions for existence and uniqueness, see \cite[Assumption 2.1]{khr20a}. We also assume the compatibility condition
\begin{equation}\label{cc}
\int_{x_{0}}^{\bar x}\int_{y_{0}}^{\bar y}\alpha(x_{0},\xi,\sigma)u(t,\xi,\sigma)\dd\sigma\dd\xi=\int_{x_{0}}^{\bar x}\int_{y_{0}}^{\bar y}\beta(y_{0},\xi,\sigma)u(t,\xi,\sigma)\dd\sigma\dd\xi
\end{equation}
to hold true.

\bigskip
Model \eqref{ruan} can be cast into \eqref{linear} by considering $X:=L^{1}([x_{0},\bar x]\times[y_{0},\bar y],\mathbb{R})$ and by defining $v(t):=u(t,\cdot,\cdot)$ for $t\geq0$, as well as
\begin{equation*}
(B\phi)(x,y):=\int_{x_{0}}^{\bar x}\int_{y_{0}}^{\bar y}K(x,y,\xi,\sigma)\phi(\xi,\sigma)\dd\sigma\dd\xi
\end{equation*}
and
\begin{equation*}
(M\phi)(x,y):=\partial_{x}\phi(x,y)+\partial_{y}\phi(x,y)+\mu(x,y)\phi(x,y)
\end{equation*}
with domain
\begin{equation*}
\setlength\arraycolsep{0.1em}\begin{array}{rcl}
\mathcal{D}(M):=\bigg\{\phi\in X\ &:\displaystyle&\ \partial_{x}\phi+\partial_{y}\phi\in X,\\[4mm]
&&\displaystyle\phi(x,y_{0})=\int_{x_{0}}^{\bar x}\int_{y_{0}}^{\bar y}\alpha(x,\xi,\sigma)\phi(\xi,\sigma)\dd\sigma\dd\xi\ \text{ for }x\in[x_{0},\bar x]\text{ and }\\[4mm]
&&\displaystyle\phi(x_{0},y)=\int_{x_{0}}^{\bar x}\int_{y_{0}}^{\bar y}\beta(y,\xi,\sigma)\phi(\xi,\sigma)\dd\sigma\dd\xi\ \text{ for }y\in[y_{0},\bar y]\bigg\}.
\end{array}
\end{equation*}
The resulting NGO $BM^{-1}$ is compact, see \cite[Lemma 5.4]{khr20a} (where the NGO is denoted by $F_{0}$). Moreover, $R_{0}=\rho(BM^{-1})$ is a simple positive eigenvalue \cite[Proposition 5.6]{khr20a} and the classical threshold stability theorem holds \cite[Theorem 5.11]{khr20a}\footnote{Actually \cite{khr20a} considers the case $K\equiv0$, but it is not difficult to argue that the same results still hold by assuming mild conditions on the kernel $K$ given the linearity of the additional term.}. 

\bigskip
The compactness of the NGO is an essential working hypothesis in view of the numerical treatment we propose in Section \ref{s_numerical}. Indeed, the general idea we wish to follow is that of reducing \eqref{geig} to a standard (generalized) eigenvalue problem for matrices through discretization. Then, the resulting dominant eigenvalue is a candidate to approximate $R_{0}$. Actually, in \cite{bfrv20,bkrv20} test cases of lack of compactness are reported for which the methodology is still able to provide accurate approximations.

On the other hand, the literature (see, e.g., \cite{web08}) offers also other models of populations with two structures that do not necessarily belong to the class \eqref{ruan}, but are still described by first-order hyperbolic PDEs and differ from \eqref{ruan} just for additional features that we believe are still amenable of the theoretical analysis concerning the semigroup approach developed in \cite{khr20a} at the price of an increased technicality. Although the latter analysis is out of the scope of the present work, by considering also the potential applicability of an approach based on discretization, in the sequel we take as a reference the following general problem:
\begin{equation}\label{model}
\left\{\setlength\arraycolsep{0.1em}\begin{array}{rcl}
\partial_{t} u(t,x,y)&+&\displaystyle a(x,y)\partial_{x}[b(x,y)u(t,x,y)]+c(x,y)\partial_{y}[d(x,y)u(t,x,y)]\\[4mm]
&=&\displaystyle-\mu(x,y)u(t,x,y)+\int_{x_{0}}^{\bar x}\int_{y_{0}}^{\bar y}K(x,y,\xi,\sigma)u(t,\xi,\sigma)\dd\sigma\dd\xi,\\[4mm]
u(t,x,y_{0})&=&\displaystyle\int_{x_{0}}^{\bar x}\int_{y_{0}}^{\bar y}\alpha(x,\xi,\sigma)u(t,\xi,\sigma)\dd\sigma\dd\xi,\\[4mm]
u(t,x_{0},y)&=&\displaystyle\int_{x_{0}}^{\bar x}\int_{y_{0}}^{\bar y}\beta(y,\xi,\sigma)u(t,\xi,\sigma)\dd\sigma\dd\xi,\\[4mm]
u(0,x,y)&=&\displaystyle u_{0}(x,y).
\end{array}\right.
\end{equation}
Note that \eqref{model} differs from \eqref{ruan} just for the presence of the coefficients $a$, $b$, $c$ and $d$, which are assumed to be non-negative functions\footnote{$b$ can be also non-positive if the relevant boundary condition is prescribed at $\bar x$ (and similarly for $d$).}. In any case, \eqref{model} can be recast as \eqref{linear} with the same choices of $X$ and $B$, but redefining $M$ as
\begin{equation}\label{M}
\setlength\arraycolsep{0.1em}\begin{array}{rcl}
(M\phi)(x,y)&:=&a(x,y)\partial_{x}[b(x,y)\phi(x,y)]+c(x,y)\partial_{y}[d(x,y)\phi(x,y)]\\[1mm]
&&+\mu(x,y)\phi(x,y)
\end{array}
\end{equation}
with domain
\begin{equation}\label{DM}
\setlength\arraycolsep{0.1em}\begin{array}{rcl}
\mathcal{D}(M):=\bigg\{&&\displaystyle\phi\in X\ :\ a\partial_{x}(b\phi)+c\partial_{y}(d\phi)\in X,\\[2mm]
&&\displaystyle\phi(x,y_{0})=\int_{x_{0}}^{\bar x}\int_{y_{0}}^{\bar y}\alpha(x,\xi,\sigma)\phi(\xi,\sigma)\dd\sigma\dd\xi,\ x\in[x_{0},\bar x],\text{ and }\\[4mm]
&&\displaystyle\phi(x_{0},y)=\int_{x_{0}}^{\bar x}\int_{y_{0}}^{\bar y}\beta(y,\xi,\sigma)\phi(\xi,\sigma)\dd\sigma\dd\xi,\ y\in[y_{0},\bar y]\bigg\}.
\end{array}
\end{equation}
\section{The numerical approach}
\label{s_numerical}
Following \cite{bfrv20,bkrv20}, we use collocation to discretize \eqref{model} in order to get a finite-dimensional version of \eqref{geig}. As we deal with models with two structures, we necessarily resort to bivariate collocation on $[x_{0},\bar x]\times[y_{0},\bar y]$. In particular, we adopt the standard approach based on tensor grids, leaving to Section \ref{s_concluding} comments relevant to other choices.

\bigskip
For $n$ and $m$ positive integers, let $x_{0}<x_{1}<\dots<x_{n}=\bar x$ be $n+1$ points in $[x_{0},\bar x]$, $y_{0}<y_{1}<\dots<y_{m}=\bar y$ be $m+1$ points in $[y_{0},\bar y]$ and $\Pi_{n,m}$ be the space of bivariate polynomials on $[x_{0},\bar x]\times[y_{0},\bar y]$ of degree at most $n$ in the first variable and at most $m$ in the second variable. Let, moreover, $X_{n,m}:=\mathbb{R}^{(n+1)(m+1)}$ be the discrete counterpart of $X$, meaning that an element $\phi\in X$ is thought as approximated by an element $\Phi\in X_{n,m}$ according to\footnote{We observe that although pointwise evaluation is meaningless in $L^{1}$, the elements $\phi$ that we are going to approximate are eigenfunctions, which are in general regular enough.}
\begin{equation*}
\phi(x_{i},y_{j})\approx\Phi_{i,j},\quad i=0,1,\ldots,n,\;j=0,1,\ldots,m,
\end{equation*}
where the components of $\Phi$ are ordered according to the sequence of double indexes $(0,0),\ldots,(0,m),(1,0),\ldots,(1,m),\ldots,(n,0),\ldots,(n,m)$\footnote{In place of $(i,j)$ one can use a single index $k=0,1,\ldots,(n+1)(m+1)$ with $k=i(m+1)+j$.}.

\bigskip
Now we construct in $X_{n,m}$ a finite-dimensional version
\begin{equation}\label{dgeig}
B_{n,m}\Phi=\lambda M_{n,m}\Phi,
\end{equation}
of \eqref{geig} as follows. Consider $\phi_{n,m}\in\Pi_{n,m}$ collocating \eqref{model} as
\begin{equation}\label{dgeigp}
\begin{cases}
(B\phi_{n,m})(x_{i},y_{j})=\lambda (M\phi_{n,m})(x_{i},y_{j}),&i=1,\ldots,n,\;j=1,\ldots,m,\\[2mm]
\displaystyle \phi_{n,m}(x_{i},y_{0})=\int_{x_{0}}^{\bar x}\int_{y_{0}}^{\bar y}\alpha(x_{i},\xi,\sigma)\phi_{n,m}(\xi,\sigma)\dd\sigma\dd\xi,&i=1,\ldots,n\\[4mm]
\displaystyle \phi_{n,m}(x_{0},y_{j})=\int_{x_{0}}^{\bar x}\int_{y_{0}}^{\bar y}\beta(y_{j},\xi,\sigma)\phi_{n,m}(\xi,\sigma)\dd\sigma\dd\xi,&j=0,1,\ldots,m,
\end{cases}
\end{equation}
which suitably takes into account the boundary conditions characterizing $\mathcal{D}(M)$ in \eqref{DM}. Note that, given \eqref{cc}, either one of the two conditions can be imposed at the node $(x_{0},y_{0})$, and we choose the second one without loss of generality. Then \eqref{dgeig} is recovered by setting
\begin{equation*}
\Phi_{i,j}:=\phi_{n,m}(x_{i},y_{j}),\quad i=0,1,\ldots,n,\;j=0,1,\ldots,m,
\end{equation*}
and by introducing the finite-dimensional counterparts $B_{n,m},M_{n,m}:X_{n,m}\to X_{n,m}$ of $B$ and $M$ respectively as
\begin{equation}\label{Bnm}
\begin{cases}
[B_{n,m}\Phi]_{i,j}:=(B\phi_{n,m})(x_{i},y_{j}),&i=1,\ldots,n,\;j=1,\ldots,m,\\[2mm]
[B_{n,m}\Phi]_{i,j}=0,&\text{otherwise},
\end{cases}
\end{equation}
and
\begin{equation}\label{Mnm}
\left\{\setlength\arraycolsep{0.1em}\begin{array}{rcll}
[M_{n,m}\Phi]_{i,j}&:=&(M\phi_{n,m})(x_{i},y_{j}),&\; i=1,\ldots,n,\;j=1,\ldots,m,\\[2mm]
\displaystyle [M_{n,m}\Phi]_{0,j}&:=&\phi_{n,m}(x_{0},y_{j})&\\[2mm]
&&\displaystyle-\int_{x_{0}}^{\bar x}\int_{y_{0}}^{\bar y}\beta(y_{j},\xi,\sigma)\phi_{n,m}(\xi,\sigma)\dd\sigma\dd\xi,&\; j=1,\ldots,m,\\[4mm]
\displaystyle [M_{n,m}\Phi]_{i,0}&:=&\phi_{n,m}(x_{i},y_{0})&\\[2mm]
&&\displaystyle-\int_{x_{0}}^{\bar x}\int_{y_{0}}^{\bar y}\alpha(x_{i},\xi,\sigma)\phi_{n,m}(\xi,\sigma)\dd\sigma\dd\xi,&\; i=1,\ldots,n\\[4mm]
\displaystyle [M_{n,m}\Phi]_{0,0}&:=&\phi_{n,m}(x_{0},y_{0})&\\[2mm]
&&\displaystyle-\int_{x_{0}}^{\bar x}\int_{y_{0}}^{\bar y}\beta(y_{0},\xi,\sigma)\phi_{n,m}(\xi,\sigma)\dd\sigma\dd\xi.
\end{array}\right.
\end{equation}
Note that the boundary conditions are implemented as zero conditions in $M_{n,m}$ and by annihilating the corresponding rows of $B_{n,m}$.

Once we have the matrix representations of the operators $B_{n,m}$ and $M_{n,m}$, an approximation $R_{0,n,m}$ of $R_{0}$ is computed as\footnote{Alternatively, one can solve the standard eigenvalue problem $B_{n,m}M_{n,m}^{-1}\Psi=\lambda\Psi$ as a discretization of \eqref{eig}. The analysis in \cite{bfrv20} shows that there is no particular advantage in doing so.}
\begin{equation*}
R_{0,n,m}:=\max{\left\{|\lambda|\in\mathbb{C}\ :\ \exists\ \Phi\in X_{n,m}\text{ such that }B_{n,m}\Phi=\lambda M_{n,m}\Phi\right\}}.
\end{equation*}
Of course, to get these matrices one has to consider to compute the action of both $B$ and $M$ on $\phi_{n,m}$ which, together with the boundary conditions, also requires to compute, e.g., double integrals. We leave a short discussion on these implementation aspects to Section \ref{s_implementation}.
\section{Implementation and testing}
\label{s_implementation}
We first discuss in Section \ref{s_discretization} some implementation choices resuming from Section \ref{s_numerical} and then perform a series of experiments in Section \ref{s_tests} to investigate experimentally the convergence properties of the proposed method.
\subsection{Discretization matrices}
\label{s_discretization}
Let us write $\phi_{n,m}$ in \eqref{dgeigp} by using the bivariate Lagrange representation
\begin{equation*}
\phi_{n,m}(x,y)=\sum_{i=0}^{n}\sum_{j=0}^{m}\ell_{x,i}(x)\ell_{y,j}(y)\Phi_{i,j},
\end{equation*}
where $\{\ell_{x,0},\ell_{x,1},\ldots,\ell_{x,n}\}$ is the Lagrange basis relevant to the collocation points in $[x_{0},\bar x]$ and $\{\ell_{y,0},\ell_{y,1},\ldots,\ell_{y,m}\}$ is the Lagrange basis relevant to the collocation points in $[y_{0},\bar y]$. Starting from \eqref{Bnm}, we get
\begin{equation*}
\setlength\arraycolsep{0.1em}\begin{array}{rcl}
(B\phi_{n,m})(x_{i},y_{j})&=&\displaystyle\int_{x_{0}}^{\bar x}\int_{y_{0}}^{\bar y}K(x_{i},y_{j},\xi,\sigma)\phi_{n,m}(\xi,\sigma)\dd\sigma\dd\xi\\[4mm]
&=&\displaystyle\sum_{k=0}^{n}\sum_{h=0}^{m}\Phi_{k,h}\int_{x_{0}}^{\bar x}\int_{y_{0}}^{\bar y}K(x_{i},y_{j},\xi,\sigma)\ell_{x,k}(\xi)\ell_{y,h}(\sigma)\dd\sigma\dd\xi.
\end{array}
\end{equation*}
In general, the double integral at the right-hand side above is not computable exactly. Therefore, for the sake of implementation, we resort to a cubature formula. In view of efficiency, it is convenient to adopt the formula based on the same points used for collocation. Let $w_{x,i}$, $i=0,1,\ldots,n$ and $w_{y,j}$, $j=0,1,\ldots,m$, be the quadrature weights associated to the collocation points in $[x_{0},\bar x]$ and $[y_{0},\bar y]$, respectively. Then we replace $[B_{n,m}\Phi]_{i,j}$ in \eqref{Bnm} for $i=1,\ldots,n$ and $j=1,\ldots,m$ with
\begin{equation*}
[\tilde B_{n,m}\Phi]_{i,j}:=\sum_{k=0}^{n}\sum_{h=0}^{m}w_{x,k}w_{y,h}K(x_{i},y_{j},x_{k},y_{h})\Phi_{k,h}.
\end{equation*}

As far as $M_{n,m}$ in \eqref{Mnm} is concerned, the same cubature above leads to similar approximations $[\tilde M_{n,m}\Phi]_{i,j}$ of $[M_{n,m}\Phi]_{i,j}$ for the choices of $i$ and $j$ involving the boundary conditions. In the remaining cases, i.e., $i=1,\ldots,n$ and $j=1,\ldots,m$, the action of $M$ on $\phi_{n,m}$ requires the use of the differentiation matrices relevant to the collocation points. As an illustration, we consider just the term involving the partial derivative with respect to the first variable. Then, starting from \eqref{M} and resorting to the bivariate interpolant of $b\phi_{n,m}$ we write
\begin{equation*}
\setlength\arraycolsep{0.1em}\begin{array}{rcl}
\left.\partial_{x}[b(x,y)\phi_{n,m}(x,y)]\right|_{(x,y)=(x_{i},y_{j})}&\approx&\displaystyle\sum_{k=0}^{n}\sum_{h=0}^{m}\left.\partial_{x}[\ell_{x,k}(x)\ell_{y,h}(y)]\right|_{(x,y)=(x_{i},y_{j})}b(x_{k},y_{h})\Phi_{k,h}\\[2mm]
&=&\displaystyle\sum_{k=0}^{n}\ell_{x,k}'(x_{i})b(x_{k},y_{j})\Phi_{k,j},
\end{array}
\end{equation*}
where we used the fact that $\ell_{y,h}(y_{j})=\delta_{h,j}$. Eventually, we replace $[M_{n,m}\Phi]_{i,j}$ for $i=1,\ldots,n$ and $j=1,\ldots,m$ with
\begin{equation*}
\setlength\arraycolsep{0.1em}\begin{array}{rcl}
[\tilde M_{n,m}\Phi]_{i,j} &:=&\displaystyle a(x_{i},y_{j})\sum_{k=0}^{n}\ell_{x,k}'(x_{i})b(x_{k},y_{j})\Phi_{k,j}\\[2mm]
&&+\displaystyle c(x_{i},y_{j})\sum_{h=0}^{m}\ell_{y,h}'(y_{j})d(x_{i},y_{h})\Phi_{i,h}+\mu(x_{i},y_{j})\Phi_{i,j}.
\end{array}
\end{equation*}

\bigskip
As far as the collocation points are concerned, we make the choice of Chebyshev extremal nodes in both variables. Indeed, they include $x_{0}$ and $y_{0}$ and thus allow for an easy handling of the boundary conditions as described in Section \ref{s_numerical}. Yet more importantly, they are shown to be near-optimal points also in the bivariate case \cite{cmv05}, with relevant Lebesgue constant growing as $O(\log n \cdot\log m)$ in uniform norm. Finally, the relevant univariate differentiation matrices can be computed explicitly \cite{tref00} and the associated quadrature, known as Clenshaw-Curtis formula, is spectrally accurate \cite{tref08}\footnote{The mentioned results refers to uniform norm. In our context $\|f\|_{X}\leq(\bar x-x_{0})(\bar y-y_{0})\|f\|_{\infty}$ trivially holds for sufficiently smooth $f$ (as eigenfunctions are in general) and $X=L^{1}([x_{0},\bar x]\times[y_{0},\bar y],\mathbb{R})$.}.

\bigskip
As a final remark, concerning the convergence of the spectrum and, in turn, of $R_{0}$, we expect a spectrally accurate behavior, i.e., an order of convergence proportional to the degree of smoothness of the concerned eigenfunctions and, therefore, of the model coefficients. Although a detailed analysis with rigorous proofs is  out of the scope of this computational and experimental work, to support our expectation we mention \cite[Theorem 1]{bbl02}, from which 
\begin{equation*}
\left\|f-p_{N}^{\ast}\right\|_{\infty}\leq\frac{C}{N^{\kappa}}\cdot\omega_{f,\kappa}\left(\frac{1}{N}\right),
\end{equation*}
holds for the best uniform approximation $p_{N}^{\ast}$ of $f\in C^{\kappa}$ with $N:=\min\{n,m\}$ and $C$ a positive constant, where
\begin{equation*}
\omega_{f,\kappa}(\delta):=\sup_{|\gamma|=\kappa}\left(\sup_{\|\nu-\eta\|\leq\delta}\left\|D^{\gamma}f(\nu)-D^{\gamma}f(\eta)\right\|\right)
\end{equation*}
for $\delta>0$ and $\|\cdot\|$ any norm on $\mathbb{R}^{2}$. Then estimates of both the interpolation and cubature errors follow similarly as for the univariate case, where also the Lebesgue constant of the collocation points appears.
\subsection{Numerical tests}
\label{s_tests}
In order to test the convergence properties of the proposed method, we use three benchmark instances of \eqref{model} for which both $R_{0}$ and the associated eigenfunction $\phi$ can be computed exactly, see Table \ref{t1} for the relevant data and coefficients. The latter are selected in such a way to obtain eigenfunctions $\phi$ of different regularity. Moreover, all the resulting eigenfunctions can be written in the form $\phi(x,y)=f(x)g(y)$. As a consequence, also the collocation polynomial can be written as the product $\phi_{n,m}(x,y)=p_{n}(x)q_{m}(y)$, leading to express the bivariate error as a function of the univariate errors, viz.
\begin{equation}\label{unibi}
\phi-\phi_{n,m}=e_{n}(f)g+fe_{m}(g)+e_{n}(f)e_{m}(g)
\end{equation}
for $e_{n}(f):=f-p_{n}$ and $e_{m}(g):=g-q_{m}$. It follows that the bivariate order is determined by the least univariate order.

\begin{table}
\begin{center}
\begin{tabular}{|c|c|c|c|}
\hline
&Example 1&Example 2&Example 3\\
\hline
$x_{0}$&$0$&$0$&$0$\\[1mm]
$\bar x$&$1$&$1$&$1$\\[1mm]
$y_{0}$&$\pi/6$&$0$&$0$\\[1mm]
$\bar y$&$\pi/4$&$1$&$2$\\[1mm]
$a(x,y)$&$(\cos y)/3$&$2x/15$&$1$\\[1mm]
$b(x,y)$&$1$&$1$&$1$\\[1mm]
$c(x,y)$&$(\sin y)/3$&$y/8$&$2y/7$\\[1mm]
$d(x,y)$&$1$&$1$&$1$\\[1mm]
$\mu(x,y)$&$(\cos y)/3$&$1/3$&$1$\\[1mm]
$K(x,y,\xi,\sigma)$&$e^{x}\cos y\sin y$&$x^{5/2}y^{8/3}$&$e^{-x}y^{7/2}$\\[1mm]
$\alpha(x,\xi,\sigma)$&$Ce^{x}/2$&$0$&$0$\\[1mm]
$\beta(y,\xi,\sigma)$&$C\sin y$&$0$&$Cy^{7/2}$\\[1mm]
$C$&$\frac{2}{(e-1)(\sqrt3-\sqrt2)}$&$-$&$\frac{9e}{2^{11/2}(e-1)}$\\[1mm]
\hline
\end{tabular}
\end{center}
\caption{Data and coefficients of model \eqref{model} for the numerical tests of Section \ref{s_tests}.}
\label{t1}
\end{table}

\paragraph{Example 1}For the choices listed in the second column of Table \ref{t1},  \eqref{geig} gives
\begin{equation*}
R_{0}=\frac{1}{C}\approx0.273066981413697,\qquad\phi(x,y)=e^{x}\sin y.
\end{equation*}
The trend of the errors for increasing $n=m$ on both $R_{0}$ and $\phi$ are reported in Figure \ref{f_ex12} (left). Convergence of infinite order is observed, which is reasonably expected being $\phi$ analytic.
\begin{figure}
\begin{center}
\includegraphics[width=.49\textwidth]{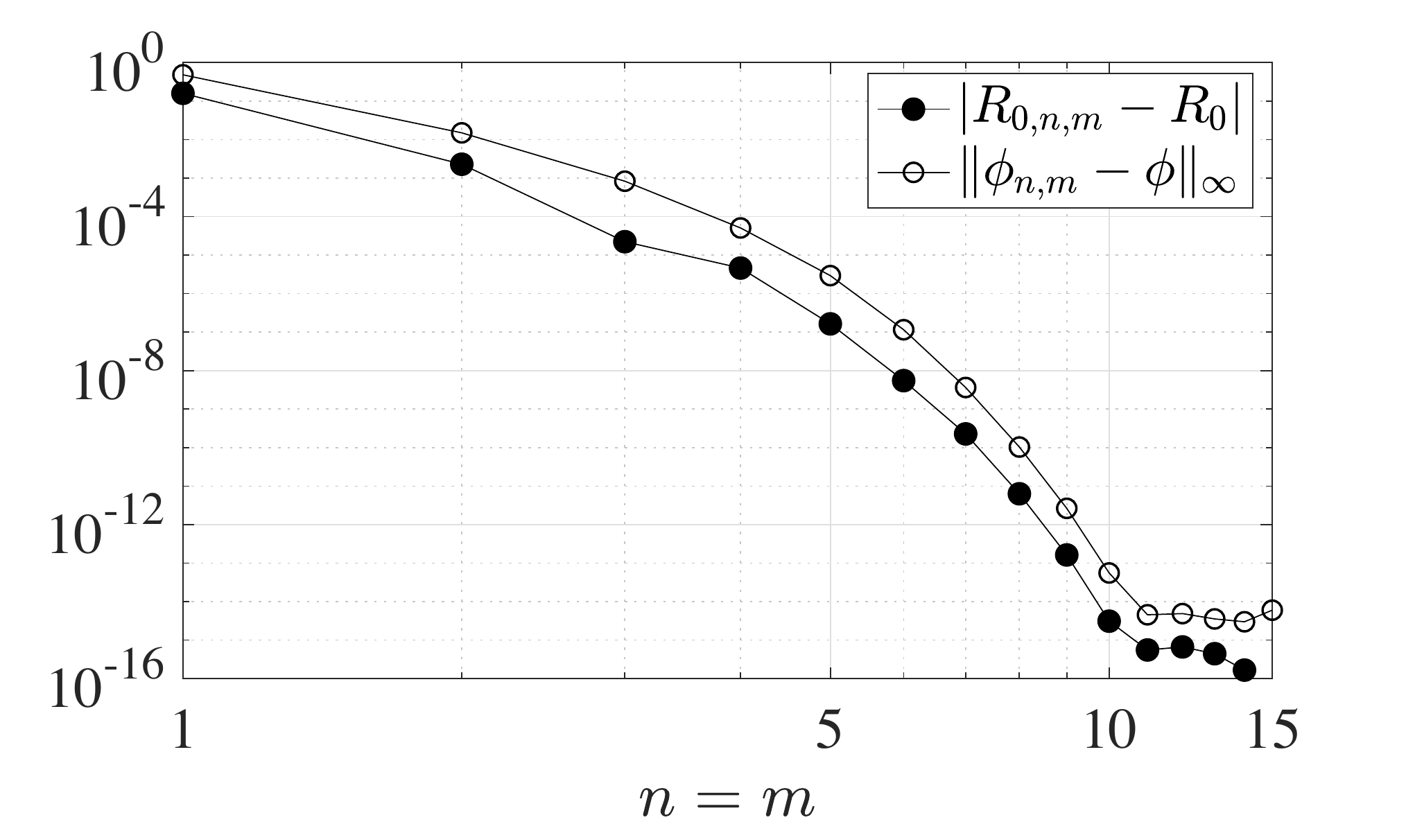}
\includegraphics[width=.49\textwidth]{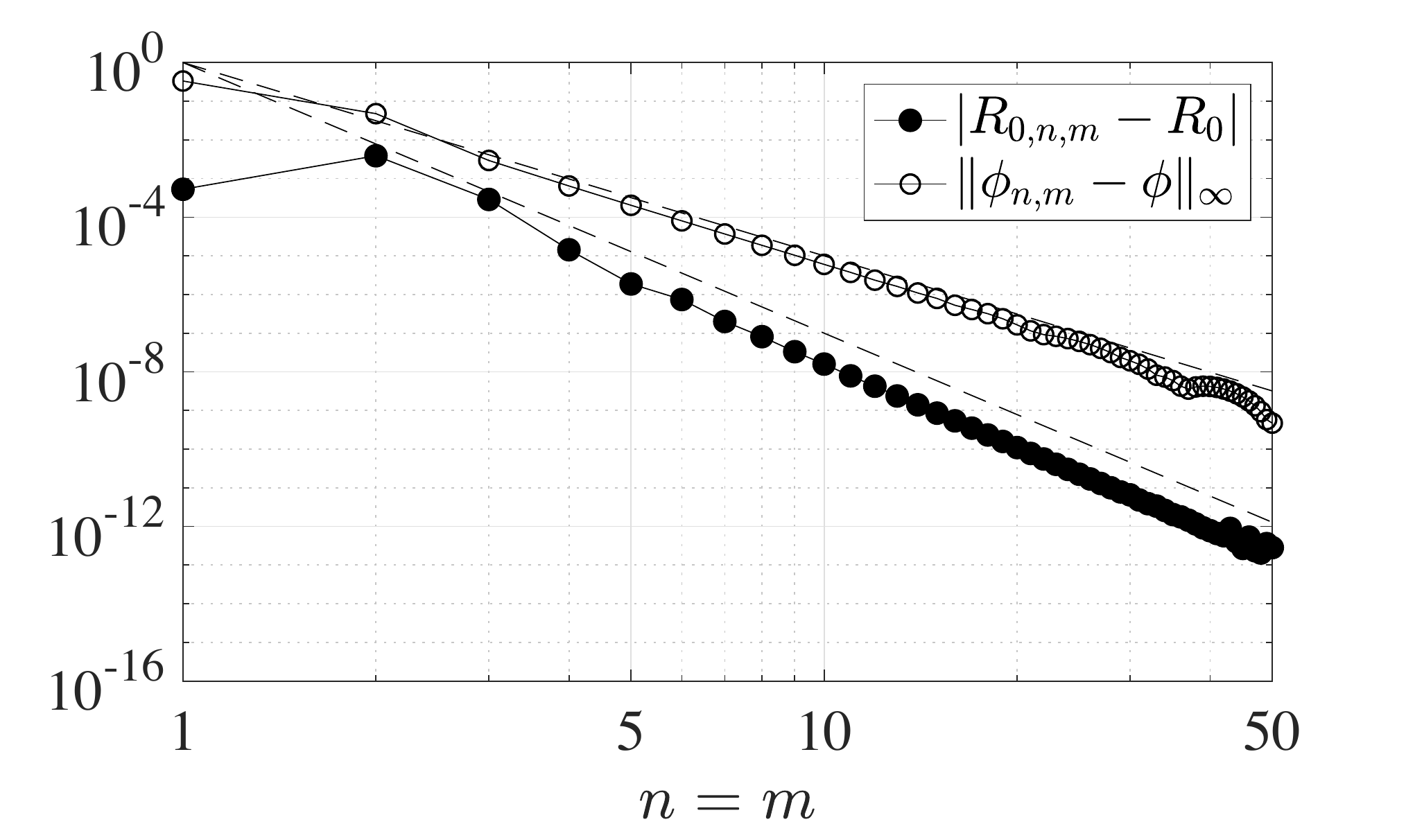}
\caption{Errors relevant to Example 1 (left) and Example 2 (right, the dashed lines indicate order $5$ and $7$), see Table \ref{t1} and the text for more details.}
\label{f_ex12}
\end{center}
\end{figure}

\paragraph{Example 2}For the choices listed in the third column of Table \ref{t1}, \eqref{geig} gives
\begin{equation*}
R_{0}=6/77\approx0.077922077922078,\qquad\phi(x,y)=x^{5/2}y^{8/3}.
\end{equation*}
The trend of the errors for increasing $n=m$ on both $R_{0}$ and $\phi$ are reported in Figure \ref{f_ex12} (right). Convergence of order 5 is observed for the error on $\phi$, order 7 for the error on $R_{0}$. We believe that this could come from the specific forms of $B$ and $M$, indeed we have
\begin{equation*}
(B\phi)(x,y)=\phi(x,y)\int_{x_{0}}^{\bar x}\int_{y_{0}}^{\bar y}\phi(\xi,\sigma)\dd\sigma\dd\xi
\end{equation*}
and $M\phi=\phi$, which give the eigenfunction $\phi$ and
\begin{equation*}
R_{0}=\int_{x_{0}}^{\bar x}\int_{y_{0}}^{\bar y}\phi(\xi,\sigma)\dd\sigma\dd\xi.
\end{equation*}
Of course a full explanation requires a rigorous error analysis as, e.g., the one developed in \cite{bkrv20} for the case of single structure. As already remarked, such analysis is out of the scope of the current investigation.

\paragraph{Example 3}For the choices listed in the fourth column of Table \ref{t1}, \eqref{geig} gives
\begin{equation*}
R_{0}=\frac{1}{C}\approx3.178501217245177,\qquad\phi(x,y)=e^{-x}y^{7/2}.
\end{equation*}
The trend of the errors for increasing $n=m$ on both $R_{0}$ and $\phi$ are reported in Figure \ref{f_ex3}. Convergence of order 7 is observed for the error on $\phi$, order 9 for the error on $R_{0}$. As the forms of $B$ and $M$ are the same of those in Example 2, this couple of examples experimentally confirm that the rate of convergence is proportional to the degree of smoothness of the eigenfunction.
\begin{figure}
\begin{center}
\includegraphics[width=.49\textwidth]{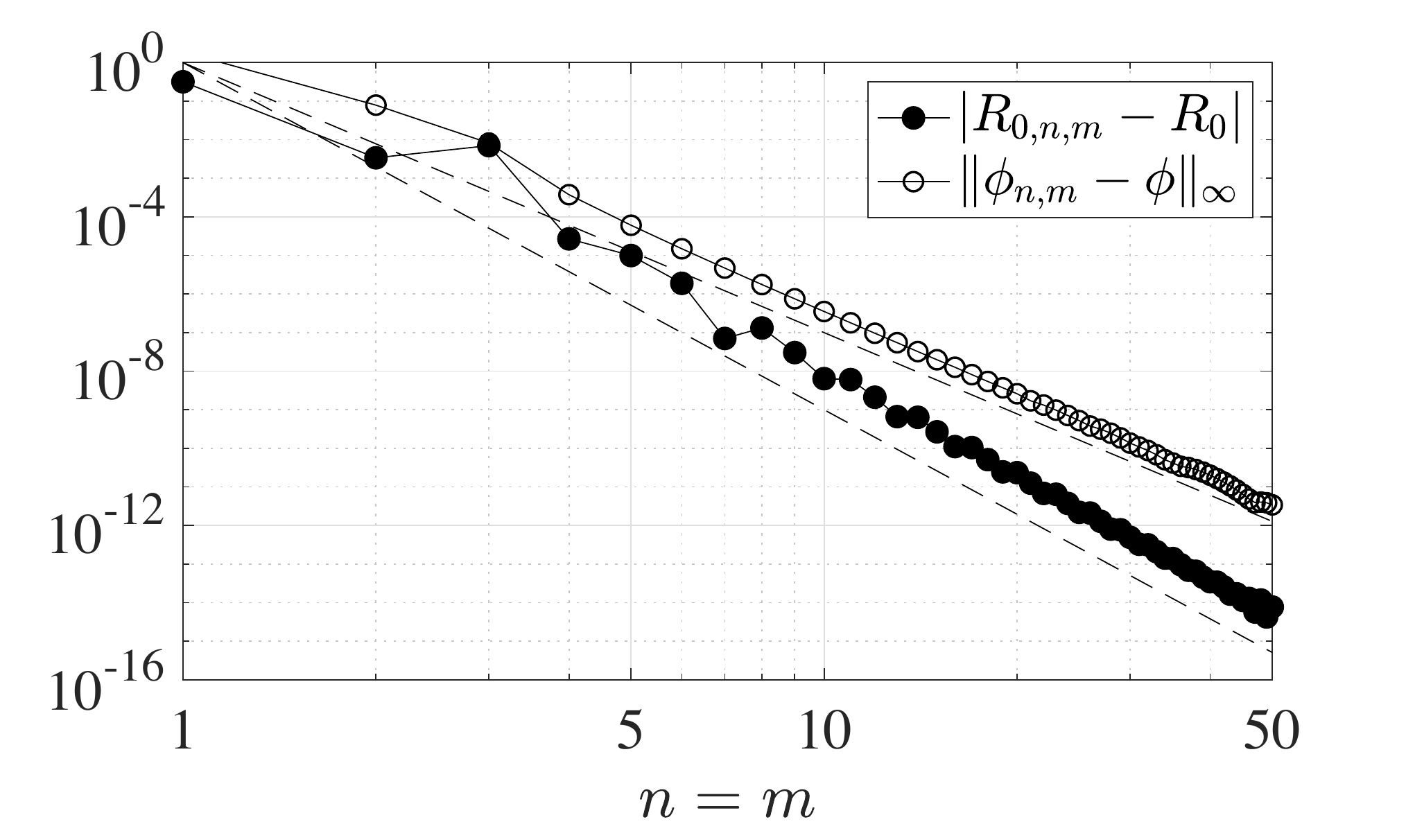}
\caption{Errors relevant to Example 3 (the dashed lines indicate order $7$ and $9$), see Table \ref{t1} and the text for more details.}
\label{f_ex3}
\end{center}
\end{figure}
\section{An age-immunity model}
\label{s_ageimmunity}
We first present in Section 5.1 an epidemic model structured by age and immunity, introduced with the goal to set a framework to investigate childhood diseases (e.g., pertussis) and their vaccination programs. After computing the disease-free equilibrium and linearizing around the latter in Section \ref{s_ai_dfe}, we derive explicit expressions for the operators $B$ and $M$ and prove the compactness of the NGO $BM^{-1}$ in Section \ref{s_ai_ngo}. Finally, in Section \ref{s_ai_tests} we apply the proposed method to approximate $R_{0}$ and the relevant eigenfunction $\phi$ in cases for which these quantities can be computed analytically, as well as in cases for which this is unattainable.
\subsection{The model}
\label{s_ai_model}
We consider the transmission of a disease in a closed population where individuals are characterized by two different structures, namely demographic age $a\in[0,\bar a]$ and immunity level $w\in[0,1]$. The former determines for instance the natural mortality of individuals, say $\mu(a)$. The latter determines the epidemiological properties of individuals: in particular, $w=0$ corresponds to fully susceptible individuals while $w=1$ corresponds to fully protected ones. Furthermore, in order to model immunity waning in time, we assume that, for a susceptible individual, $w$ varies according to
\begin{equation*}
w'(t)=-g(w(t))
\end{equation*}
for some positive differentiable\footnote{See, e.g., \cite{web08,dou11}.} function $g$, whereas we assume that $w$ does not change during the infectious period of an individual. 

The given population is then divided in two classes formed by susceptible and infected individuals. From the biological point of view, we suppose that, upon contact with an infectious individual, a susceptible individual can get infected with probability $\beta(w)$ or its immune system can be boosted at the maximal immunity level with probability $\alpha(w)$. To fix ideas, we assume that $\alpha(w)+\beta(w)=1$. After being infected, an individual becomes immediately infective with infectivity $\nu(w)$ depending on its immunity level $w$: for instance, individuals with higher immunity level, once infected, may develop milder symptoms thus being less infectious. We furthermore assume that infected individuals have a constant recovery rate $\gamma>0$ and, upon recovery, they acquire full immunity ($w=1$), which then wanes in time with rate $g$. Lastly, for the sake of simplicity we assume no additional mortality from the disease. We remark however that, despite the overall fatality rate of childhood diseases is typically small, the mortality of infants, unable to be vaccinated, is of major concern for many immunization programs. This information could be easily incorporated in the equations by adding an age-dependent disease-induced mortality term.

\bigskip
Let $s(t,a,w)$ and $i(t,a,w)$ be respectively the density of susceptible and infected individuals. The dynamics of the susceptibles is described by
\begin{equation*}
\left\{\setlength\arraycolsep{0.1em}\begin{array}{l}
\partial_{t} s(t,a,w)+\partial_{a}s(t,a,w)-\partial_{w}[g(w)s(t,a,w)]=-[\mu(a)+\lambda(t,w)+\eta(t,w)]s(t,a,w),\\[2mm]
\displaystyle g(1)s(t,a,1)=\gamma\int_{0}^{1}i(t,a,w)\dd w+\int_{0}^{1}\eta(t,w)s(t,a,w)\dd w,\\[3mm]
s(t,0,w)=\mathcal B(w),
\end{array}\right.
\end{equation*}
where $\mathcal{B}(w)$ is the population birth rate, and
\begin{align*}
&\lambda(t,w):=\beta(w)\int_{0}^{1}\nu(\omega)\int_{0}^{\bar a}i(t,a,\omega)\dd a\dd\omega,\\
&\eta(t,w):=\alpha(w)\int_{0}^{1}\nu(\omega)\int_{0}^{\bar a}i(t,a,\omega)\dd a\dd\omega
\end{align*}
are, respectively, the force of infection and, using a similar terminology, the ``force of boosting'' acting on a susceptible individual with immunity level $w$. The dynamics of the infected individuals is described by

\begin{equation*}
\left\{\setlength\arraycolsep{0.1em}\begin{array}{l}
\partial_{t}i(t,a,w)+\partial_{a}i(t,a,w)=\lambda(t,w)s(t,a,w)-\mu(a)i(t, w, a)-\gamma i(t,a,w),\\[1mm]
i(t,a,1)=0,\\[1mm]
i(t,0,w)=0,
\end{array}\right.
\end{equation*}
where we assume absence of vertical transmission. Note that the equation for the infectives has no derivative with respect to $w$, as we assumed that $w$ does not change during the infectious period.
\subsection{Disease-free equilibrium and linearization}
\label{s_ai_dfe}
Let us look for a stationary solution $s(t,a,w)\equiv\bar s(a,w)$ in the absence of infected individuals, i.e., $i(t,a,w)\equiv\bar i(a,w)\equiv0$. As the force of infection and the force of boosting vanish, we are left with
\begin{equation}\label{PDEsbar}
\left\{\setlength\arraycolsep{0.1em}\begin{array}{l}
\partial_{a}\bar s(a,w)-\partial_{w}[g(w)\bar s(a,w)]=-\mu(a)\bar s(t,a,w),\\[1mm]
g(1)\bar s(a,1)=0,\\[1mm]
\bar s(0,w)=\mathcal{B}(w),
\end{array}\right.
\end{equation}
which can be solved through the method of characteristics as follows. The characteristic curves 
$a\rightarrow w(a)$ in the rectangle $[0,\bar a]\times[0,1]$ are the solutions of the IVP
\begin{equation}\label{IVPw}
\begin{cases}
w'(a)=-g(w(a)),\\
w(a_{0})=w_{0},
\end{cases}
\end{equation}
which is well defined for all $(a_{0}, w_{0})\in[0,\bar a]\times[0,1]$ thanks to the differentiability of $g$. In particular, $g(w)>0$ implies decreasing characteristic curves, so that they start either from $w(0)=w_{0}$ for some $w_{0}\in[0,1]$ or from $w(a_{0})=1$ for some $a_{0}\in[0,\bar a]$. Once that one of such curves is selected, we define
\begin{equation*}
\sigma(a):=\bar s(a,w(a)),
\end{equation*}
and it is not difficult to see from \eqref{PDEsbar} that
\begin{equation}\label{IVPsigma}
\sigma'(a)=[g'(w(a))-\mu(a)]\sigma(a)
\end{equation}
holds along the curve. The latter is a linear yet scalar nonautonomous ODE, and the related IVP can be easily solved, assuming that $\mu$ and $g$ are smooth enough to ensure existence and uniqueness. The initial condition is either $\sigma(0)=\bar s(0,w_{0})=\mathcal{B}(w_{0})$ for the solution along the characteristic curve starting at $(0,w_{0})$ for any $w_{0}\in[0,1]$ or $\sigma(a_{0})=\bar s(a_{0},1)=0$ for the solution along the characteristic curve starting at $(a_{0},1)$ for any $a_{0}\in[0,\bar a]$. A couple of examples are given below.

\paragraph{Example 4}Let us choose $g(w)=\gamma(c-w)$ for $c\geq1$, $\mathcal{B}(w)=(1-w)^2$ and $\mu(a)=\bar\mu>0$. By solving \eqref{IVPw} we obtain the characteristic curves starting at $(0,w_{0})$ as
\begin{equation*}
w(a)=c+e^{\gamma a}(w_{0}-c)
\end{equation*}
and those starting at $(a_{0},1)$ as
\begin{equation*}
w(a)=c+e^{\gamma (a-a_{0})}(1-c).
\end{equation*}
They both coincide with $w^{\ast}(a):=c+e^{\gamma a}(1-c)$ when $(a_{0},w_{0})=(0,1)$ (note that $w^{\ast}(a)\equiv1$ when $c=1$). Moreover, since \eqref{IVPsigma} reduces to
\begin{equation*}
\sigma'(a)=-\left(\gamma+\bar\mu\right)\sigma(a),
\end{equation*}
we get the disease-free equilibrium
\begin{equation*}
\bar s(a,w)=\begin{cases}
\mathcal{B}(c+e^{-\gamma a}(w-c))e^{-(\gamma+\bar\mu)a}&\text{ for }w\leq w^{\ast}(a),\\
0&\text{ for }w\geq w^{\ast}(a).
\end{cases}
\end{equation*}
See Figure \ref{f3456} for a couple of instances, viz. $c>1$ (top) and $c=1$ (bottom).
\begin{figure}[htbp]
\begin{center}
\includegraphics[width=.49\textwidth]{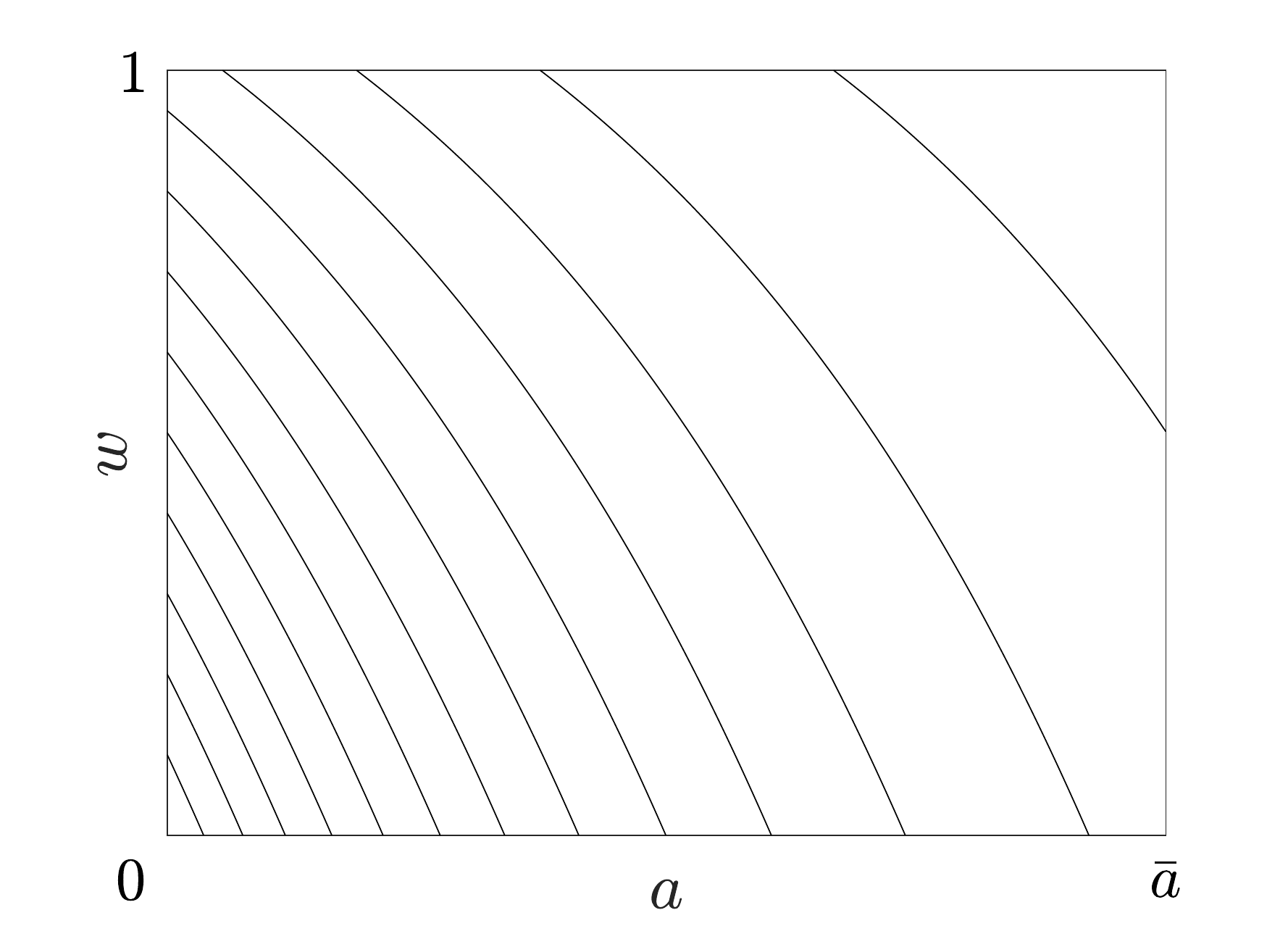}
\includegraphics[width=.49\textwidth]{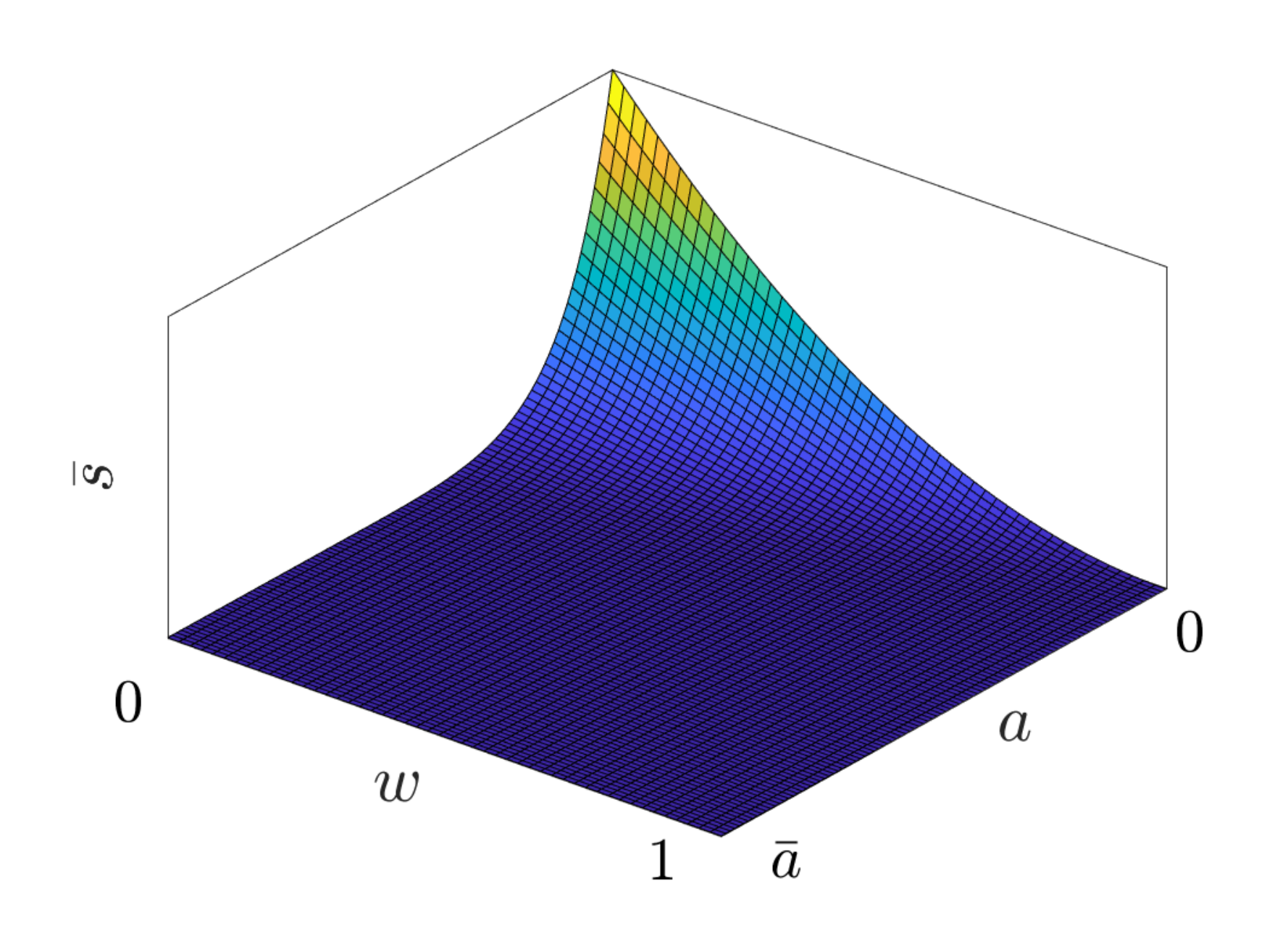}
\includegraphics[width=.49\textwidth]{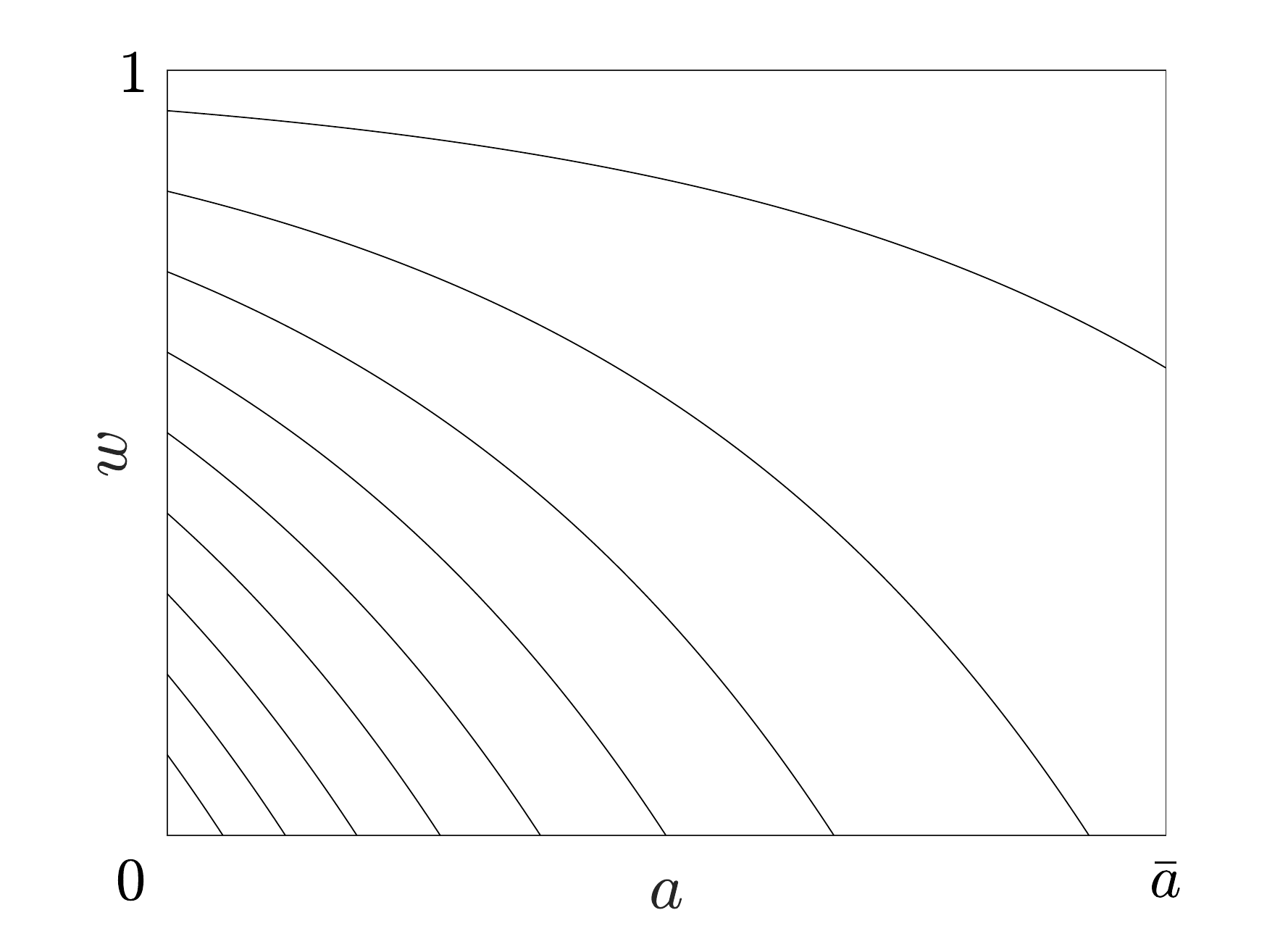}
\includegraphics[width=.49\textwidth]{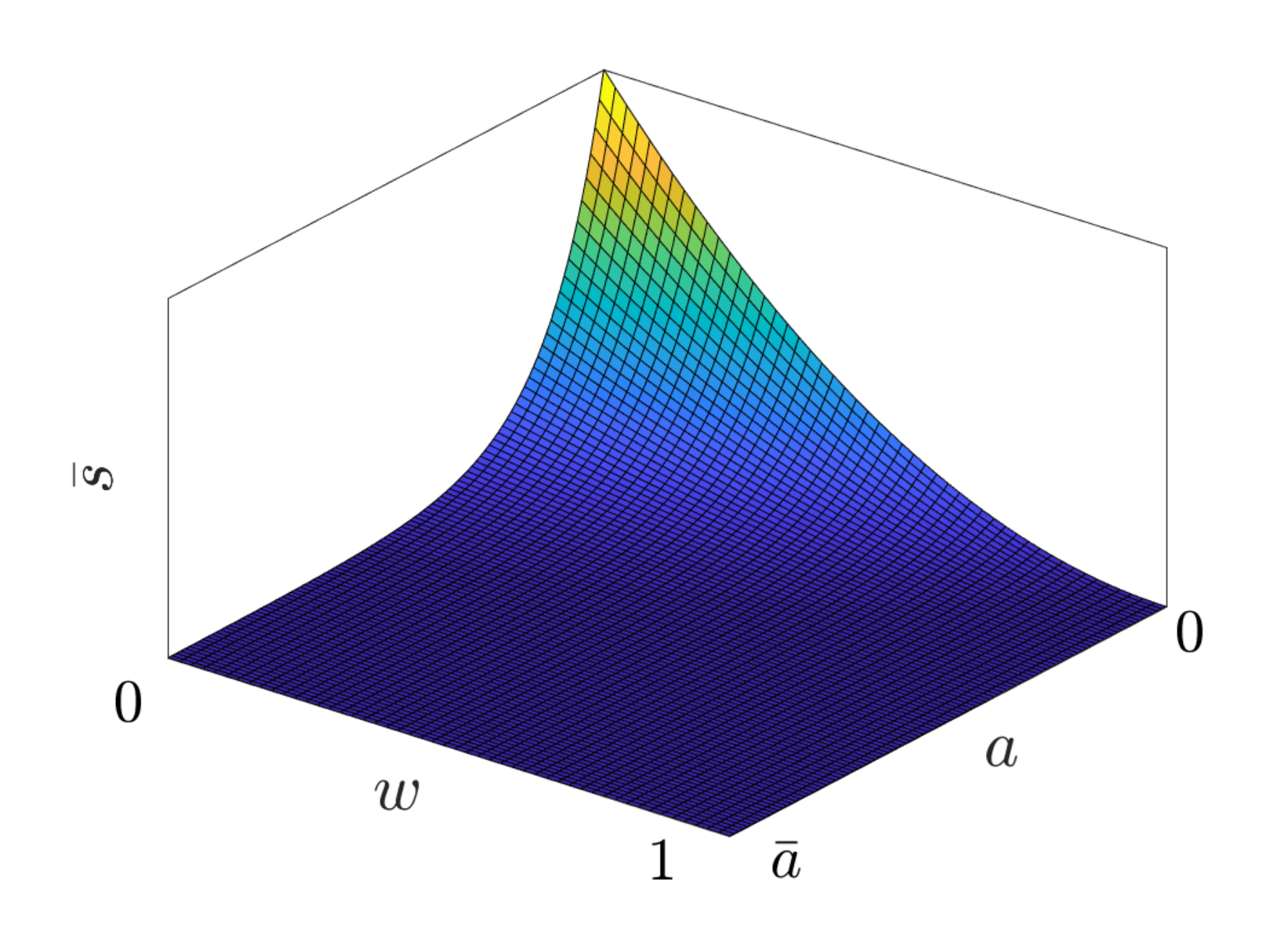}
\caption{Characteristic curves of \eqref{PDEsbar} (left) and susceptibles at the disease-free equilibrium (right) for Example 4 with $\bar a=2$, $\bar\mu=\gamma=1$ and $c=1.5$ (top) or $c=1$ (bottom).}\label{f3456}
\end{center}
\end{figure}

\paragraph{Example 5}Let us choose $g(w)=w$, $\mathcal{B}(w)=(1-w)^2$ and $\mu(a)=1/(\bar a-a)^2$. By solving \eqref{IVPw} we obtain the characteristic curves starting at $(0,w_{0})$ as
\begin{equation*}
w(a)=e^{-a}w_{0}
\end{equation*}
and those starting at $(a_{0},1)$ as
\begin{equation*}
w(a)=e^{-(a-a_{0})}.
\end{equation*}
They both coincide with $w^{\ast}(a):=e^{-a}$ when $(a_{0},w_{0})=(0,1)$. Moreover, since \eqref{IVPsigma} reduces to
\begin{equation*}
\sigma'(a)=\left(1-\frac{1}{(\bar a- a)^2}\right)\sigma(a),
\end{equation*}
we get the disease-free equilibrium
\begin{equation}\label{sbar2}
\bar s(a,w)=\begin{cases}
\mathcal{B}(we^{a})e^{a}e^{1/\bar a}e^{-1/(\bar a-a)}&\text{ for }w\leq w^{\ast}(a),\\
0&\text{ for }w\geq w^{\ast}(a).
\end{cases}
\end{equation}
See Figure \ref{f12} for a specific instance.
\begin{figure}[htbp]
\begin{center}
\includegraphics[width=.49\textwidth]{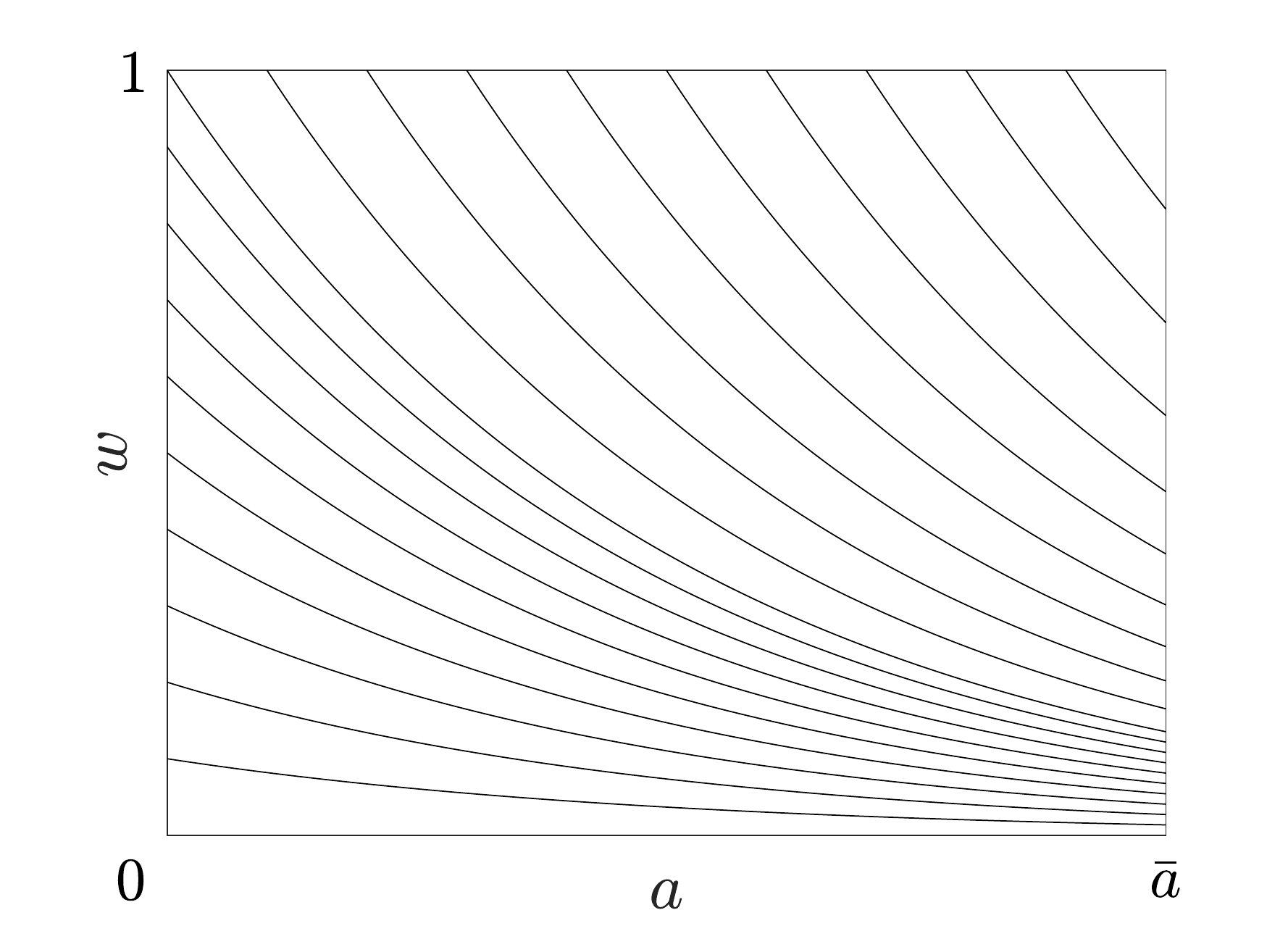}
\includegraphics[width=.49\textwidth]{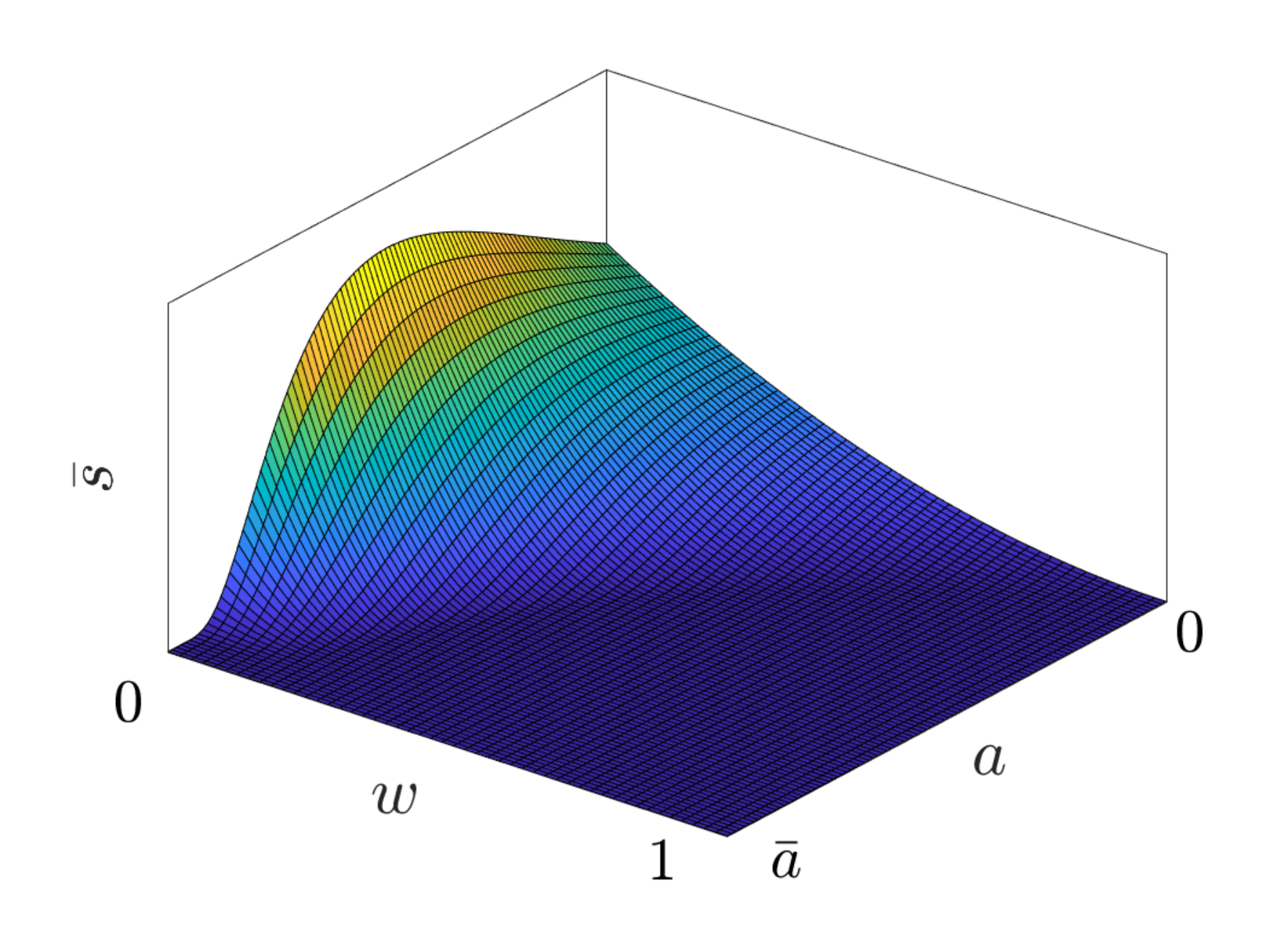}
\caption{Characteristic curves of \eqref{PDEsbar} (left) and susceptibles at the disease-free equilibrium (right) for Example 5 with $\bar a=2$.}\label{f12}
\end{center}
\end{figure}

\bigskip
Now, by defining the local perturbations
\begin{equation*}
u(t,a,w):=s(t,a,w)-\bar s(a,w),\qquad v(t,a,w):=i(t,a,w)
\end{equation*}
with respect to the disease-free equilibrium (recall that $\bar i(a,w)\equiv0$), we get the two linearized PDEs
\begin{equation*}
\left\{\setlength\arraycolsep{0.1em}\begin{array}{l}
\partial_{t}u(t,a,w)+\partial_{a}u(t,a,w)-\partial_{w}(g(w)u(t,a,w))=-\mu(a)u(t,a,w)-[\tilde\lambda(t,w)+\tilde\eta(t,w)]\bar s(a,w),\\[1mm]
\displaystyle g(1)u(t,a,1)=\gamma\int_{0}^{1} v(t,a,w)\dd w+\int_{0}^{1}\tilde\eta(t,w)\bar s(a,w)\dd w,\\[1mm]
u(t,0,w)=0,
\end{array}\right.
\end{equation*}
and
\begin{equation}\label{PDEy}
\left\{\setlength\arraycolsep{0.1em}\begin{array}{l}
\partial_{t}v(t,a,w)+\partial_{a}v(t,a,w)=\tilde\lambda(t,w)\bar s(a,w)-[\mu(a)+\gamma]v(t,a,w),\\[1mm]
v(t,a,1)=0,\\[1mm]
v(t,0,w)=0,
\end{array}\right.
\end{equation}
where
\begin{align*}
\tilde\lambda(t,w)&:=\beta(w)\int_{0}^{1}\nu(\omega)\int_0^{\bar a} v(t,a,\omega)\dd a\dd\omega, \\
\tilde\eta(t,w)&:=\alpha(w)\int_{0}^{1}\nu(\omega)\int_0^{\bar a} v(t,a,\omega)\dd a\dd\omega.
\end{align*}
Note that the PDE for the infectives $v$ is independent of the susceptibles $u$.
\subsection{Next generation operator, compactness and $R_{0}$}
\label{s_ai_ngo}
Set now $X:=L^{1}\left([0,\bar a]\times[0,1],\mathbb{R}\right)$. From \eqref{PDEy} we can define
\begin{equation*}
(B\phi)(a,w):=\beta(w)\left(\int_{0}^{1}\nu(\omega)\int_{0}^{\bar a}\phi(\xi, \omega)\dd \xi\dd\omega\right)\bar s(a,w)
\end{equation*}
and
\begin{equation*}
(M\phi)(a,w):=\partial_{a}\phi(a,w)+[\mu(a)+\gamma]\phi(a,w),
\end{equation*}
with
\begin{equation*}
D(M)=\left\{\phi\in X\ :\ \partial_{a}\phi\in X\text{ and }\phi(0,w)=\phi(a,1)=0\right\}.
\end{equation*}
To obtain an explicit expression for the NGO, we begin by observing that $M$ can be inverted by solving $M\phi=\psi$ for a given $\psi\in X$, i.e., the IVP 
\begin{equation*}
\left\{\setlength\arraycolsep{0.1em}\begin{array}{l}
\partial_{a}\phi(a,w)=-[\mu(a)+\gamma]\phi(a,w)+\psi(a,w)\\[1mm]
\phi(0,w)=0.
\end{array}\right.
\end{equation*}
Then we get
\begin{equation*}
(M^{-1}\psi)(a,w)=\int_{0}^{a} T(a,\xi)\psi(\xi,w)\dd\xi
\end{equation*}
for
\begin{equation*}
T(a,\xi):=e^{\displaystyle-\int_{\xi}^{a}[\mu(\sigma)+\gamma]\dd\sigma}.
\end{equation*}
We can thus define the NGO as
\begin{equation*}
(BM^{-1}\psi)(a,w):=\beta(w)\bar s(a,w)\left(\int_{0}^{1}\nu(\omega)\int_{0}^{\bar a}(M^{-1}\psi)(\xi, \omega)\dd \xi\dd\omega\right).
\end{equation*}
It is evident that the NGO is a positive operator. Next we show that it is also compact under mild assumptions. We first recall the following result from \cite{bre11} (where $\tau_{h}f:=f(\cdot+h)$).
\begin{lemma}\label{lemmaconv}
Let $G\in L^{q}(\mathbb{R}^{n})$ with $1\leq q<+\infty$. Then $\lim_{\|h\|_{\ast}\to0}\| \tau_{h}G-G\|_{q}=0$ for $\|\cdot\|_{\ast}$ any norm in $\mathbb{R}^{n}$.\end{lemma}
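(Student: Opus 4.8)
The plan is to exploit that $\tau_{h}$ acts as an isometry on $L^{q}$ and to reduce the general case to a dense, well-behaved subclass via an $\varepsilon/3$ argument. First I would recall that, since Lebesgue measure is translation invariant, $\|\tau_{h}G\|_{q}=\|G\|_{q}$ for every $h$ and every $G\in L^{q}(\mathbb{R}^{n})$; in particular $\|\tau_{h}(G-\varphi)\|_{q}=\|G-\varphi\|_{q}$ for any $\varphi$. Second, I would establish the claim for the special class $C_{c}(\mathbb{R}^{n})$ of continuous functions with compact support: such a $\varphi$ is uniformly continuous, so $\sup_{x}|\varphi(x+h)-\varphi(x)|\to0$ as $\|h\|_{\ast}\to0$; moreover, for $\|h\|_{\ast}\leq1$ the supports of $\varphi$ and $\tau_{h}\varphi$ all lie in a fixed compact (hence finite-measure) set $\Omega$, whence $\|\tau_{h}\varphi-\varphi\|_{q}^{q}\leq|\Omega|\,\sup_{x}|\varphi(x+h)-\varphi(x)|^{q}\to0$.

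The third step is the density of $C_{c}(\mathbb{R}^{n})$ in $L^{q}(\mathbb{R}^{n})$, valid precisely because $1\leq q<+\infty$. Given $G\in L^{q}(\mathbb{R}^{n})$ and $\varepsilon>0$, I would pick $\varphi\in C_{c}(\mathbb{R}^{n})$ with $\|G-\varphi\|_{q}<\varepsilon/3$. The triangle inequality together with the isometry property then yields
\begin{equation*}
\|\tau_{h}G-G\|_{q}\leq\|\tau_{h}(G-\varphi)\|_{q}+\|\tau_{h}\varphi-\varphi\|_{q}+\|\varphi-G\|_{q}<\frac{2\varepsilon}{3}+\|\tau_{h}\varphi-\varphi\|_{q}.
\end{equation*}
By the second step the remaining term drops below $\varepsilon/3$ once $\|h\|_{\ast}$ is small enough, giving $\|\tau_{h}G-G\|_{q}<\varepsilon$ and hence the limit. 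Finally, since all norms on $\mathbb{R}^{n}$ are equivalent, the condition $\|h\|_{\ast}\to0$ is identical for every choice of $\|\cdot\|_{\ast}$, so fixing one particular norm entails no loss of generality.

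The only genuinely delicate ingredient is the density of compactly supported continuous functions in $L^{q}$, which is exactly where the hypothesis $q<+\infty$ enters: it fails for $q=\infty$, where translation is not continuous (consider $G=\chi_{[0,\infty)}$). Everything else, namely the isometry of $\tau_{h}$, the uniform continuity of $\varphi$ on its compact support, and the norm-equivalence remark, is routine, so I expect no real obstacle beyond correctly invoking this standard density theorem.
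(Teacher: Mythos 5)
Your proof is correct. Note that the paper does not prove this lemma at all---it is recalled verbatim from the cited reference (Brezis, \emph{Functional Analysis, Sobolev Spaces and Partial Differential Equations}), and your argument (translation invariance of the $L^{q}$ norm, the case of $C_{c}(\mathbb{R}^{n})$ via uniform continuity on a fixed finite-measure set, density of $C_{c}(\mathbb{R}^{n})$ in $L^{q}$ for $q<+\infty$, and the $\varepsilon/3$ splitting) is precisely the standard proof given there, so there is nothing to compare beyond confirming that your treatment of the norm $\|\cdot\|_{\ast}$ and the failure at $q=\infty$ are both accurate.
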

\begin{theorem} If $\beta,\nu\in L^{\infty}([0,1],\mathbb{R})$, $\mu\in L^{1}_{+}([0,\bar a],\mathbb{R})$ and $\bar s\in X$ then the NGO is compact.
\end{theorem}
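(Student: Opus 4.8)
The goal is to show that the next-generation operator
\[
(BM^{-1}\psi)(a,w)=\beta(w)\bar s(a,w)\left(\int_{0}^{1}\nu(\omega)\int_{0}^{\bar a}(M^{-1}\psi)(\xi,\omega)\dd\xi\dd\omega\right)
\]
is compact on $X=L^{1}([0,\bar a]\times[0,1],\mathbb{R})$. The plan is to exploit the very special structure of this operator: the double integral appearing in the parenthesis is a bounded \emph{linear functional} of $\psi$, so that $BM^{-1}$ has rank-one--like structure with a fixed profile $\beta(w)\bar s(a,w)$ scaled by a scalar depending on $\psi$. A finite-rank operator is automatically compact, so if I can show that the functional is well defined and bounded on $X$ and that the profile lies in $X$, compactness will follow essentially for free.

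First I would verify that $M^{-1}$ is a bounded operator on $X$. From the explicit formula $(M^{-1}\psi)(a,w)=\int_{0}^{a}T(a,\xi)\psi(\xi,w)\dd\xi$ with $0\le T(a,\xi)\le 1$ (using $\mu\in L^{1}_{+}$ and $\gamma>0$, so the exponent is nonpositive), a direct application of Tonelli/Fubini gives $\|M^{-1}\psi\|_{X}\le \bar a\,\|\psi\|_{X}$. Next I would check that the linear functional
\[
\ell(\psi):=\int_{0}^{1}\nu(\omega)\int_{0}^{\bar a}(M^{-1}\psi)(\xi,\omega)\dd\xi\dd\omega
\]
is bounded on $X$: since $\nu\in L^{\infty}$, one has $|\ell(\psi)|\le \|\nu\|_{\infty}\,\|M^{-1}\psi\|_{X}\le \|\nu\|_{\infty}\,\bar a\,\|\psi\|_{X}$. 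Finally, the profile $p(a,w):=\beta(w)\bar s(a,w)$ belongs to $X$ because $\beta\in L^{\infty}([0,1])$ and $\bar s\in X$, so that $\|p\|_{X}\le\|\beta\|_{\infty}\,\|\bar s\|_{X}$.

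With these three facts in hand, I would conclude by writing $BM^{-1}\psi=\ell(\psi)\,p$, which exhibits $BM^{-1}$ as the composition of a bounded scalar functional $\ell$ with multiplication into the fixed element $p\in X$. This is a rank-one operator (its range is contained in the one-dimensional span of $p$), hence bounded and compact, which is exactly the claim. It is worth remarking that this argument does \emph{not} require the convolution-continuity result of Lemma~\ref{lemmaconv}: the rank-one structure makes compactness immediate, so the main subtlety is merely the bookkeeping of the integrability hypotheses ($\beta,\nu\in L^{\infty}$, $\mu\in L^{1}_{+}$, $\bar s\in X$) rather than any genuine analytic obstruction.

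I do not expect a serious obstacle here, since the operator is of finite (indeed unit) rank once the force-of-infection term is recognised as a single scalar functional. The only point requiring a little care is confirming that $T(a,\xi)\le 1$ so that $M^{-1}$ maps $X$ into itself boundedly; this is where the hypothesis $\mu\in L^{1}_{+}$ (nonnegativity of $\mu$, together with $\gamma>0$) is used. Lemma~\ref{lemmaconv} would only become relevant in the more general setting where the infected dynamics genuinely depended on the immunity variable $w$ through a derivative term, producing a bona fide integral operator rather than a rank-one one; in the present model the assumption that $w$ is frozen during infection collapses that complication.
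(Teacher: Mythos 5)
Your proof is correct, but it takes a genuinely different route from the paper. You observe that the operator factors as $BM^{-1}\psi=\ell(\psi)\,p$ with $\ell$ a bounded linear functional (bounded because $T(a,\xi)\le 1$ for $\xi\le a$ and $\nu\in L^{\infty}$) and $p=\beta\bar s\in X$, so that $BM^{-1}$ is a bounded rank-one operator and compactness is immediate. The paper instead does not exploit the finite-rank structure at this point: it extends everything by zero to $\mathbb{R}^{2}$, bounds $|k(\psi)|\le C\bar a\|\nu\|_{L^{\infty}}\|\psi\|_{X}$, and verifies the equicontinuity-of-translations criterion of the Kolmogorov--Riesz--Fr\'echet theorem uniformly on bounded sets, invoking Lemma~\ref{lemmaconv} (continuity of translation in $L^{1}$) applied to $\beta\bar s$. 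Your argument is shorter and more transparent, and it makes explicit why the spectrum consists of a single nonzero eigenvalue --- a fact the paper only uses afterwards when deriving the closed-form $R_{0}=k(\beta\bar s)$. What the paper's approach buys is robustness: the Kolmogorov--Riesz--Fr\'echet argument survives if the prefactor $\beta(w)\bar s(a,w)$ and the functional were replaced by a genuine non-degenerate kernel (e.g., transmission rates depending jointly and non-separably on the traits of infector and infectee, or a $w$-dependent dynamics during infection), where no finite-rank structure is available; your own closing remark correctly identifies this limitation. One minor bookkeeping point: to justify $\|M^{-1}\psi\|_{X}\le\bar a\|\psi\|_{X}$ you should state the Tonelli step explicitly, but this is routine since the integrand is nonnegative after taking absolute values.
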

\begin{proof}
Set $K:=BM^{-1}$ for brevity. We begin by observing that being $\mu\in X_{+}$ we have that $T(a,b)$ is continuous in $[0,\bar a]^2$ and hence bounded, say by a constant $C>0$. Furthermore we observe that under the above assumptions $K\psi\in X$ for all $\psi\in X$ and that, being $[0,\bar a]\times[0,1]$ a set of finite Lebesgue-measure, we have that $\beta\bar s\in X$. In view of applying the Kolmogorov-Riesz-Fr\'echet Theorem to prove compactness, we extend all the functions by zero outside $[0,\bar a]\times[0,1]$. Then we fix $m>0$, consider the set $U:=\{\psi\in X\ :\ \|\psi\|_{X}\leq m\}$ and prove that
\begin{equation}\label{cosadavedere}
\lim_{(h_{1},h_{2})\to(0,0)}\int_{\mathbb{R}}\int_{\mathbb{R}}\left|(K\psi)(a+h_{1},w+h_{2})-(K\psi)(a,w)\right|\dd w\dd a=0
\end{equation}
uniformly with respect to $\psi\in U$. By defining 
\begin{equation}\label{k}
k(\psi):=\int_{0}^{1}\nu(\omega)\int_o^{\bar a}(M^{-1}\psi)(\xi, \omega)\dd \xi\dd\omega,\quad\psi\in X,
\end{equation}
we have
\begin{equation*}
\setlength\arraycolsep{0.1em}\begin{array}{rcl}
|k(\psi)|&\leq&\displaystyle\int_{0}^{1}\int_{0}^{\bar a}\int_{0}^{\xi}\big|T(\xi, b)\big|\big|\psi(b, \omega)\big|\big|\nu(\omega)\big|\dd b\dd \xi\dd\omega\\[3mm]
&\leq&\displaystyle C\bar a\int_{0}^{\bar a}\int_{0}^{1}\big|\psi(b, \omega)\big|\big|\nu(\omega)\big|\dd\omega\dd b\\[3mm]
&\leq&\displaystyle C\bar a\|\nu\|_{L^\infty([0,1],\mathbb{R})}\|\psi\|_{X}.
\end{array}
\end{equation*}
Then, by defining $H:=C\bar a\|\nu\|_{L^\infty([0,1],\mathbb{R})}$ we arrive at
\begin{equation*}
\setlength\arraycolsep{0.1em}\begin{array}{rcl}
\displaystyle\int_{\mathbb{R}}\int_{\mathbb{R}}\big |(K\psi)(a+h_{1},w+h_{2})&-&(K\psi)(a,w)\big|\dd w\dd a\\[2mm]
&\leq&\displaystyle Hm\int_{\mathbb{R}}\int_{\mathbb{R}}\big|\beta(w+h_{2})\bar s(a+h_{1},w+h_{2})\\[3mm]
&&-\beta(w)\bar s(a,w)\big|\dd w\dd a,
\end{array}
\end{equation*}
which converges to $0$ uniformly with respect to $\psi$ as $(h_{1},h_{2})\to (0,0)$ thanks to Lemma \ref{lemmaconv}.
\end{proof}

Finally, we are able to get an explicit expression for $R_{0}$. To this aim, it is better to resort to \eqref{eig}, which becomes
\begin{equation*}
\lambda\psi(a,w)=\beta(w)\bar s(a,w)\left(\int_{0}^{1}\nu(\omega)\int_{0}^{\bar a}\int_{0}^{\xi}T(\xi,b)\psi(b,\omega)\dd b\dd\xi\dd\omega\right).
\end{equation*}
Then observe that the term between parentheses corresponds to $k$ in \eqref{k}, which is a constant, yet depending (linearly) on $\psi$. Therefore, $\psi(a,w)=\beta(w)\bar s(a,w)$ is an eigenfunction (modulo normalization) with corresponding eigenvalue
\begin{equation}\label{kbsbar}
\lambda=k(\beta\bar s).
\end{equation}
In particular, this is the only eigenvalue and the compactness of the NGO ensures that $R_{0}=\lambda$. Eventually, we observe from \eqref{k} that $R_{0}$ is in fact positive if $\nu$, $\beta$ and $\bar s$ are positive.
\subsection{Computation of $R_{0}$}
\label{s_ai_tests}
\paragraph{Example 6} Let us consider again Example 4. By choosing $\bar a=2$, $c=\gamma=\bar\mu=1$ and $\beta(w)=\nu(w)=1-w$, it is not difficult to recover explicitly
\begin{equation*}
R_{0}=\frac{1}{20}\left[\frac{1}{2}e^{-8}-e^{-4}+\frac{1}{2}\right]
\approx 0.024092604621261
\end{equation*}
with corresponding eigenfunction 
\begin{equation*}
\phi(a,w)=\frac{1}{2}(1-w)^3e^{-2a}\left[1-e^{-2a}\right].
\end{equation*}
The trend of the errors for increasing $n=m$ on both $R_{0}$ and $\phi$ are reported in Figure \ref{f_ai} (left). Convergence of infinite order is observed as expected.
\begin{figure}
\begin{center}
\includegraphics[width=.49\textwidth]{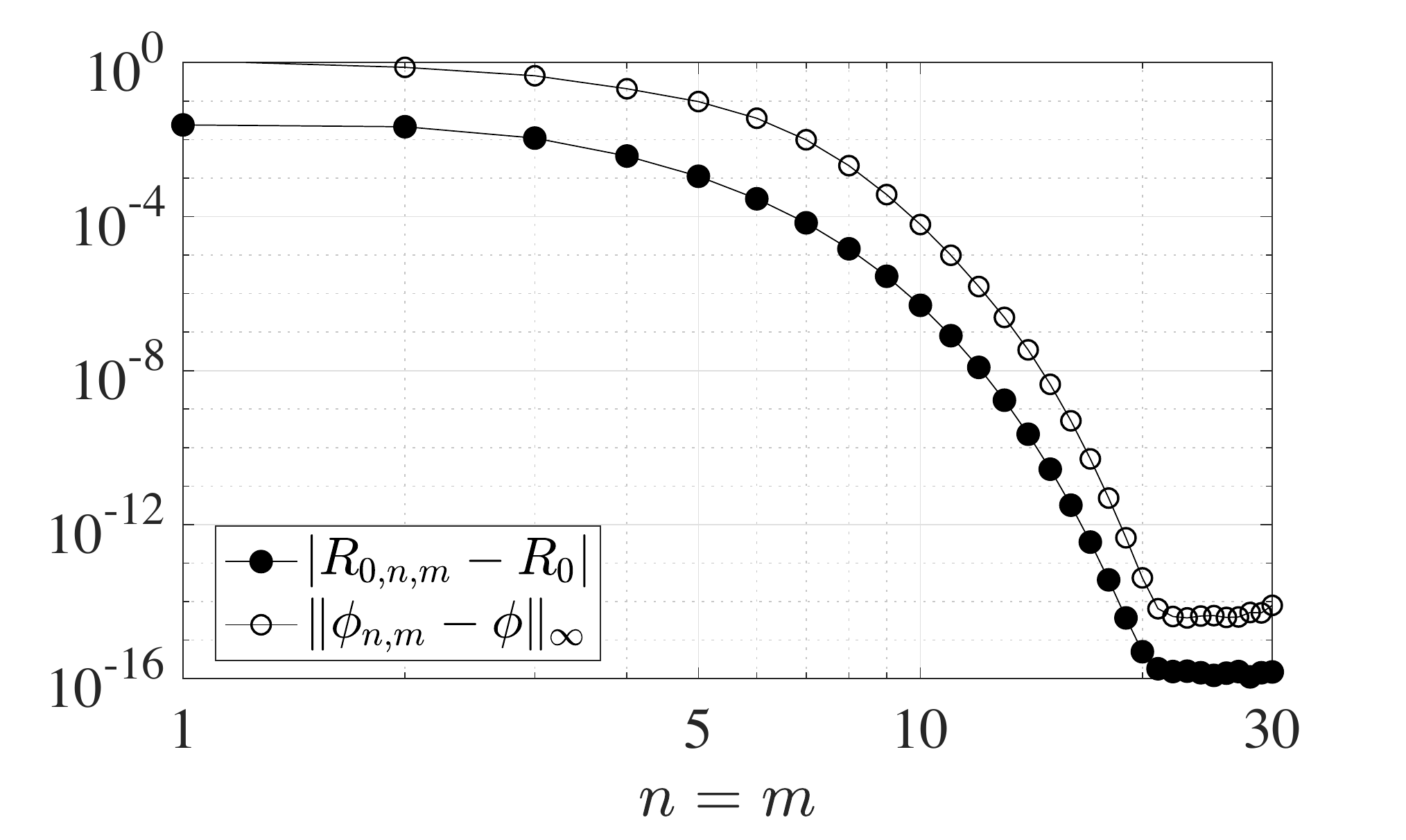}
\includegraphics[width=.49\textwidth]{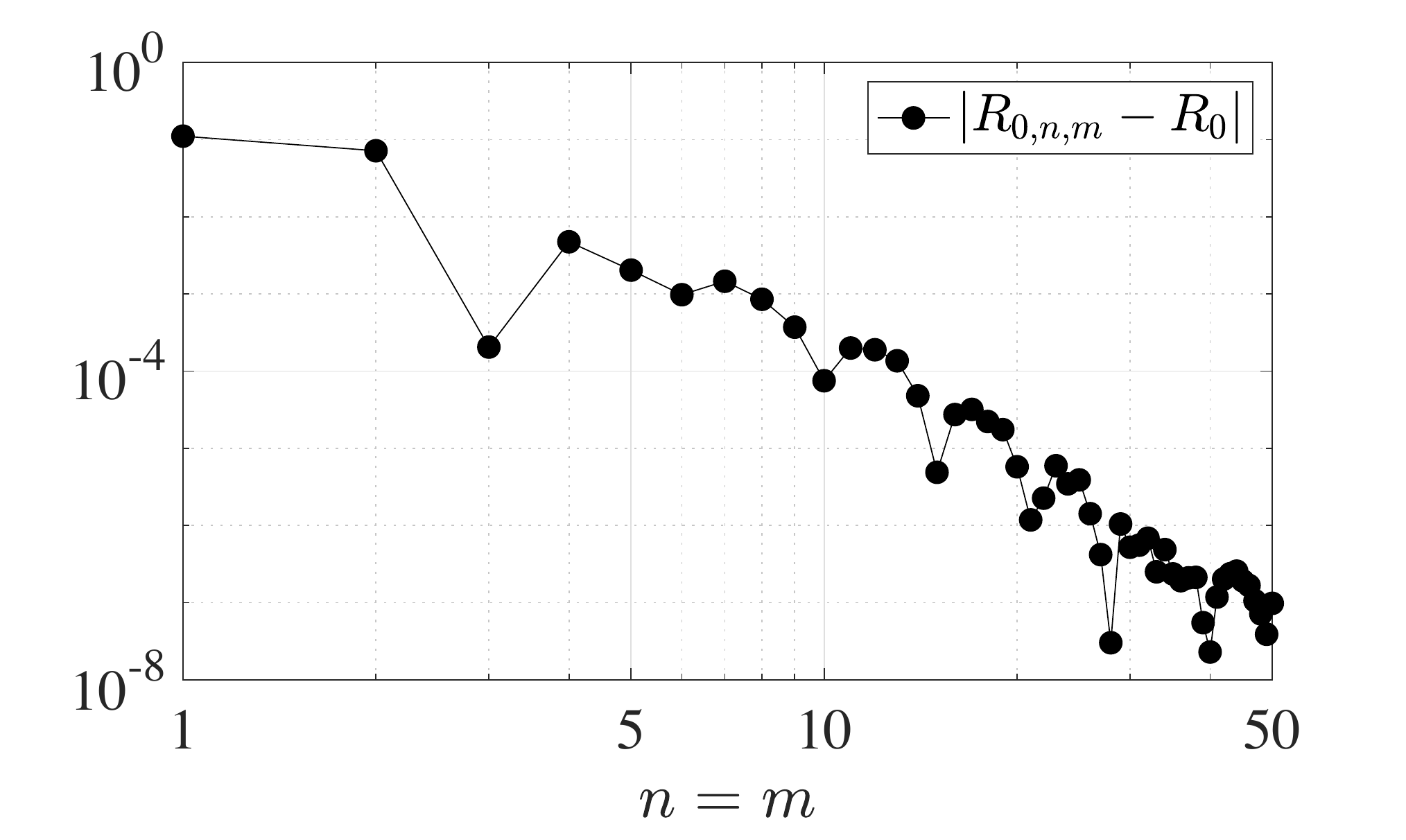}
\caption{Errors relevant to Example 6 (left) and Example 7 (right), see the text for more details.}
\label{f_ai}
\end{center}
\end{figure}

\paragraph{Example 7} Let us consider again Example 5, with $\bar a=2$, $\gamma=1$ and $\beta(w)=\nu(w)=1-w$. In this case it is not straightforward to compute explicitly $R_{0}$ (nor the corresponding eigenfunction). We thus assume as a reference value for $R_{0}$ the one computed with $n=m=100$, which reads
\begin{equation*}
R_{0}\approx 0.111258187908847.
\end{equation*}
The trend of the error for increasing $n=m$ on $R_{0}$ is reported in Figure \ref{f_ai} (right). Convergence of finite order occurs, which indicates lack of smoothness. In fact, by recalling \eqref{kbsbar} and \eqref{sbar2}, we observe that by taking the second derivative of $\bar s$ with respect to $w$ along any line with constant $a$ at $w=e^{-a}$ from above we are left with $\partial_{ww}\bar s(a,w)=2e^{3a}e^{1/\bar a}e^{-1/(\bar a-a)}\neq0$, while from below it obviously vanishes.
\section{Concluding remarks}
\label{s_concluding}
We have proposed a numerical method, based on bivariate collocation and cubature on tensor grids, to approximate $R_{0}$ in epidemic models with two structuring variables.
Essentially, the abstract linear equation describing the dynamics of the perturbations around the disease-free equilibrium is approximated with a finite-dimensional system, and the corresponding NGO is approximated with a matrix. 
$R_{0}$, which is the dominant eigenvalue of the NGO (when the latter is compact), is then approximated by the dominant eigenvalue of the approximating matrix.

We have provided several examples in which the corresponding eigenfunction is smooth and the experimental convergence to $R_{0}$ is of infinite order. For less regular eigenfunctions, the observed convergence is of finite order, as typically expected from polynomial-based approximations. The potential infinite order of convergence is fundamental to obtain good approximations with low-dimensional matrices.

Structured population models allow to incorporate more realistic features than, for instance, simpler ODE models. However, it is reasonable to say that the higher complexity and the lack of numerical tools to handle this type of systems hamper their use in real applications.
Due to the importance of $R_{0}$ in response to epidemic outbreaks, this work is of major relevance to provide modelers with numerical tools to analyze structured epidemic models, thus promoting their use in realistic applications.
In view of this, we have proposed a simple model structured by age and immunity that incorporates some of the essential features of childhood diseases, like waning and boosting of immunity. 
Having numerical methods at hand, this framework can become an effective support to study issues of public health relevance, like immunization programs.

Encouraged by the numerical investigations included in this paper, a fundamental next step will be to provide a rigorous proof of convergence, following the lines of \cite{bkrv20}. 
In this spirit, the numerical convergence observed here not only confirms that the order of convergence depends on the regularity of the eigenfunctions, but also supports the conjecture that no spurious eigenvalues of large modulus arise in the approximation. 

We plan to expand this work along several directions. 
One first question relates to multivariate polynomial approximation: here, we have considered models with two structures and we have used tensor products to construct the two-dimensional grid, the polynomials and the resulting differentiation matrices and cubature formulas. Having set the framework for models with two structures, it would be interesting to extend the technique to similar problems with more structures and provide a general computing framework. We do not expect theoretical complications from the numerical point of view (e.g., resorting to new approaches), despite the more involved discretization procedure. But a multivariate polynomial approximation of tensored form quickly becomes expensive. Alternative choices of discretization nodes constructed directly on the multi-dimensional domain may improve the computational cost required for cubature formulas and differentiation matrices. The Padua points in two dimensions \cite{bcdvx2,cmv08} and their generalizations to higher dimension \cite{bdv,bdv2} are well-suited to this aim.

Finally, we have here restricted to first-order hyperbolic PDEs, as they are the natural framework to treat variables that are characterized by a specific evolution in time. An interesting next step would be to consider models with more general ``spatial'' variables, including for instance diffusion processes. Also, in this paper we have restricted to the case of structuring variables belonging to bounded intervals. However, unbounded domains arise naturally in applications, for instance if variables are assumed to follow a normal or Gamma distribution. This case involves further numerical difficulties that could be addressed in future work. While in some situation it could be possible to apply a variable transformation to map the unbounded domain to a bounded one, an alternative approach could involve discretization and interpolation techniques specific to unbounded domains (e.g., \cite{gsv18,mn08}).
\section*{Acknowledgments}
\label{s_ackno}
DB, FS and RV are members of INdAM Research group GNCS and of UMI Research group ``Modellistica socio-epidemiologica''. The research of FS and JH was supported by the NSERC-Sanofi Industrial Research Chair in Vaccine Mathematics, Modeling and Manufacturing. FS is also supported by the UKRI through the JUNIPER modelling consortium [grant number MR/V038613/1].

\bigskip
\noindent
Declaration of interest: none


\begin{thebibliography}{10}
\expandafter\ifx\csname url\endcsname\relax
  \def\url#1{\texttt{#1}}\fi
\expandafter\ifx\csname urlprefix\endcsname\relax\def\urlprefix{URL }\fi
\expandafter\ifx\csname href\endcsname\relax
  \def\href#1#2{#2} \def\path#1{#1}\fi



\bibitem{bmk20}
M.~A. Billah, M.~M. Miah, M.~N. Khan, Reproductive number of coronavirus: A
  systematic review and meta-analysis based on global level evidence, {\em PLoS ONE}
  15~(11) (2020) e0242128, DOI: https://doi.org/10.1371/journal.pone.0242128.

\bibitem{kun20}
T.~Kuniya, Prediction of the epidemic peak of {C}oronavirus disease in {J}apan,
  {\em J. Clin. Med.} 9~(3) (2020) 1--7.

\bibitem{ps20}
A.~Pugliese, S.~Sottile, Inferring the {COVID}-19 infection curve in
  {I}taly, https://arxiv.org/abs/2004.09404 (2020).

\bibitem{tang20}
B.~Tang, X.~Wang, Q.~Li, N.~L. Bragazzi, S.~Tang, Y.~Xiao, J.~Wu, Estimation of
  the transmission risk of the 2019-ncov and its implication for public health
  interventions, {\em J. Clin. Med.} 9 (2020) 462, DOI:
  https://doi.org/10.3390/jcm9020462.

\bibitem{mr08}
P.~Magal, S.~Ruan, {\em Structured Population Models in Biology and Epidemiology},
  Vol. 1936 of Mathematical Biosciences Subseries, Springer, 2008, DOI:
  10.1007/978-3-540-78273-5.

\bibitem{md86}
H.~Metz, O.~Diekmann, {\em The dynamics of physiologically structured populations},
  no.~68 in Lecture {N}otes in {B}iomathematics, Springer-Verlag, New York,
  1986.

\bibitem{bcr17b}
C.~Barril, A.~Calsina, J.~Ripoll, A practical approach to ${R}_{0}$ in
  continuous-time ecological models, {\em Math. Meth. Appl. Sci.} 41~(18) (2018)
  8432--8445.

\bibitem{bfrv20}
D.~Breda, F.~Florian, J.~Ripoll, R.~Vermiglio, Efficient numerical computation
  of the basic reproduction number for structured populations, {\em J. Comput. Appl.
  Math.} 384, DOI: 10.1016/j.cam.2020.113165 (2020).

\bibitem{bkrv20}
D.~Breda, T.~Kuniya, J.~Ripoll, R.~Vermiglio, Collocation of next-generation
  operators for computing the basic reproduction number of structured
  populations, {\em J. Sci. Comput.} 85~(40), DOI: 10.1007/s10915-020-01339-1 (2020).

\bibitem{tref00}
L.~N. Trefethen, {\em Spectral methods in {MATLAB}}, Software - Environment - Tools
  series, SIAM, Philadelphia, 2000.

\bibitem{guo19}
W.~Guo, M.~Ye, X.~Li, A.~Meyer-Baese, Q.~Zhang, A theta-scheme approximation of
  basic reproduction number for an age-structured epidemic system in a finite
  horizon, {\em Math. Biosci. Eng.} 16~(5) (2019) 4107--4121.

\bibitem{kun17}
T.~Kuniya, Numerical approximation of the basic reproduction number for a class
  of age-structured epidemic models, {\em Appl. Math. Lett.} 73 (2017) 106--112.

\bibitem{doe07}
E.~Doedel, {\em Lecture notes on numerical analysis of nonlinear equations}, in:
  H.~M. Osinga, B.~Krauskopf, J.~Gal{\'a}n-Vioque (Eds.), Numerical
  continuation methods for dynamical systems, Understanding Complex Systems,
  Springer, 2007, pp. 1--49.

\bibitem{web08}
G.~Webb, {\em Population models structured by age, size, and spatial position}, in:
  P.~Magal, S.~Ruan (Eds.), Structured Population Models in Biology and
  Epidemiology, Vol. 1936 of Mathematical Biosciences Subseries, Springer,
  2008, pp. 1--49, DOI: 10.1007/978-3-540-78273-5.

\bibitem{khr20b}
H.~Kang, X.~Huo, S.~Ruan, Nonlinear physiologically structured population
  models with two internal variables, {\em J. Nonlinear Sci.} 30 (2020) 2847--2884,
  DOI: 10.1007/s00332-020-09638-5.

\bibitem{khr20a}
H.~Kang, X.~Huo, S.~Ruan, On first-order hyperbolic partial differential
  equations with two internal variables modeling population dynamics of two
  physiological structures, {\em Annali di Matematica} (2020) DOI: 10.1007/s10231-020-01001-5
  .

\bibitem{sha74}
H.~H. Shaefer, {\em Banach lattices and positive operators}, Grundlehren der
  mathematischen Wissenschaften, Springer-Verlag, Berlin Heidelberg, 1974.

\bibitem{kr48}
M.~G. Krein, M.~A. Rutman, Linear operators leaving invariant a cone in a
  Banach space, {\em Uspehi Matem. Nauk (N. S.)} 3 1~(23) (1948) 4--95, (in Russian).
  {\em Amer. Math. Soc. Transl.}, 26:128pp, 1950 (in English).

\bibitem{cmv05}
M.~Caliari, S.~De~Marchi, M.~Vianello, Bivariate polynomial interpolation on
  the square at new nodal sets, {\em Appl. Math. Comput.} 165 (2005) 261--274.

\bibitem{tref08}
L.~N. Trefethen, Is {G}auss quadrature better than {C}lenshaw-{C}urtis?, {\em SIAM
  Rev.} 50~(1) (2008) 67--87.

\bibitem{bbl02}
T.~Bagby, L.~Bos, N.~Levenberg, Multivariate simultaneous approximation,
  {\em Constr. Approx.} 18 (2002) 569--577.

\bibitem{dou11}
M.~Doumic, A.~Marciniak-Czochra, B.~Perthame, J.~P. Zubelli, A structured
  population model of cell differentiation, {\em SIAM J. Appl. Math.} 71~(6) (2011)
  1918--1940.

\bibitem{bre11}
H.~Brezis, {\em Functional Analysis, {S}obolev Spaces and Partial Differential
  Equations}, Universitext, Springer, New York, 2011.

\bibitem{bcdvx2}
L.~Bos, M.~Caliari, S.~De Marchi, M.~Vianello, and Y.~Xu.
\newblock Bivariate Lagrange interpolation at the Padua points: the ideal theory approach
\newblock {\em Numer. Math.}, 108(1): 43--57, 2007.

\bibitem{cmv08}
M.~Caliari, S.~De~Marchi, and M.~Vianello.
\newblock Bivariate Lagrange interpolation at the Padua points: computational aspects.
\newblock {\em J. Comput. Appl. Math.}, 221(2):284--292, 2008.

\bibitem{bdv}
L.~Bos, S.~De Marchi, M.~Vianello, and Y.~Xu.
\newblock Trivariate polynomial approximation on Lissajous curves
\newblock {\em IMA J. Numer. Anal}, 37(1): 519--541, 2017.

\bibitem{bdv2}
L.~Bos, S.~De Marchi, M.~Vianello, and Y.~Xu.
\newblock Polynomial approximation on Lissajous curves in the d-cube
\newblock {\em Appl. Numer. Math.}, 116: 47--56, 2017.

\bibitem{gsv18}
M.~Gyllenberg, F.~Scarabel, and R.~Vermiglio.
\newblock Equations with infinite delay: numerical bifurcation analysis via pseudospectral discretization
\newblock {\em Appl. Math. Comput.}, 333: 490--505, 2018.

\bibitem{mn08}
G.~Mastroianni, and G.~Milovanovic.
\newblock {\em Interpolation Processes -- Basic Theory and Applications}
\newblock Springer Monographs in Mathematics, Springer-Verlag, Berlin Heidelberg, 2008.

\end{thebibliography}

\end{document}